\author{Boris Bilich}
\address{Department of Mathematics, HSE University, Usacheva str. 6, 119048, Moscow, Russian Federation}
\email{bilichboris1999@gmail.com}
\title{Taylor spectrum for modules over Lie algebras}
\subjclass[2020]{Primary 17B56, Secondary 17B30, 47A13}
\keywords{Taylor spectrum, Lie algebra cohomology}
\theoremstyle{plain}      
\newtheorem{theorem}{Theorem}[section]  
\newtheorem{corollary}[theorem]{Corollary}     
\newtheorem{lemma}[theorem]{Lemma}     
\newtheorem{proposition}[theorem]{Proposition}
\theoremstyle{remark}      
\newtheorem{example}[theorem]{Example} 
\newtheorem*{remark}{Remark} 
\newtheorem*{acknowledgments}{Acknowledgments}
\theoremstyle{definition}
\newtheorem{definition}[theorem]{Definition}
\newcommand{\cmpl}{\mathbb{C}}
\newcommand{\lieg}{\mathfrak{g}}
\newcommand{\lieh}{\mathfrak{h}}
\newcommand{\lies}{\mathfrak{s}}
\newcommand{\lien}{\mathfrak{n}}
\newcommand{\Ext}{\operatorname{Ext}}
\newcommand{\Hom}{\operatorname{Hom}}
\newcommand{\ad}{\operatorname{ad}}
\newcommand{\Tor}{\mathrm{Tor}}
\newcommand{\ZZ}{\mathbb{Z}}
\newcommand{\CC}{\mathbb{C}}
\newcommand{\lmod}[1]{\operatorname{\mathbf{#1-mod}}}
\newcommand{\rmod}[1]{\operatorname{\mathbf{mod-#1}}}
\begin{document}

\begin{abstract}
    In this paper we generalize the notion of the Taylor spectrum to modules over an arbitrary Lie
    algebra and study it for finite-dimensional modules. We show that the spectrum can be described as the set of simple submodules in case of nilpotent and semisimple Lie algebras. We also
    show that this result does not hold for solvable Lie algebras and obtain a precise description
    of the spectrum in case of Borel subalgebras of semisimple Lie algebras.
\end{abstract}

\maketitle

\section{Introduction}%
\label{sec:introduction}
In 1970, Taylor introduced the notion of the joint spectrum for a finite tuple
of commuting operators $T = (T_1,...,T_n)$ on a Banach space $V$ in \cite{Taylor1970}. It was defined as the
set of points
$\lambda=(\lambda_1,...,\lambda_n)\in \CC^n$ such that a certain Koszul complex associated to $\lambda$ is not exact. The
Taylor spectrum of a single operator coincides with the classical notion of spectrum. The same year, Taylor estabilished the
existence of the holomorphic functional calculus in a neighborhood of the spectrum in 
\cite{Taylor1970b}. In 1972, he proposed a framework for a noncommutative functional
calculus in \cite{Taylor1972} but the notion of spectrum for non-commuting tuples of operators was not
yet developed. 

The first step in this direction was made by Fainshtein in \cite{Fainshtein}. He
generalized the Taylor spectrum to tuples of operators generating a finite dimensional nilpotent Lie algebra.
In \cite{boasso}, Boasso and Larotonda independently introduced the Taylor spectrum for representations of solvable Lie algebras. It is given by \begin{equation} \label{e:boassolarotonda}
	\sigma^{BL}_\lieg(V) = \{ \lambda \in (\lieg/[\lieg,\lieg])^*\colon \Tor^{U\lieg}_k(V,
	\CC_\lambda)\neq 0 \text{ for some } k \in \ZZ_{\geq0}\}
\end{equation}
where $\lieg$ is a solvable Lie algebra and $V$ is a right Banach $\lieg$-module.

The spectrum $\sigma^{BL}$ is equivalent to the definition of Fainshtein in the nilpotent case. Moreover, we can think about commuting tuples $T=(T_1,...,T_n)$ as representations of the $n$-dimensional abelian Lie algebra and it turns out that the spectrum for abelian Lie algebras is equivalent to the original Taylor spectrum. 

The Taylor spectrum for Lie algebras was actively studied in works of several authors. A variety of classical theorems were generalized, such as the projection property and different kinds of the spectral mapping theorem. The details can be found in the monograph \cite{beltita} and in a series of works by Dosi (see for example
\cite{Dosi1,Dosi2,Dosi3}). In 2010, Dosi estabilished a noncommutative holomorphic functional calculus in a neighborhood of the spectrum for supernilpotent Lie algebras of operators \cite{Dosi4}.

In this paper we introduce the notion of Taylor spectrum for modules over an arbitrary finite-dimensional Lie algebra and study it for finite-dimensional modules. Let $\hat\lieg$ be the set of isomorphism classes of simple $\lieg$-modules. The spectrum of a $\lieg$-module is given as a subset of $\hat\lieg$ by the non-vanishing property of some homology groups (Definition \ref{d:tspec}).  

The article is organized as follows. In Section \ref{sec:preliminaries} we recall neccesary facts about Lie algebra cohomology. In Section \ref{sec:spectrumofmodule} we give the definition of the spectrum and study its properties. We obtain a complete description for the spectrum of finite-dimensional modules over a semisimple Lie algebra in Theorem \ref{t:ssspectrumissubmodules}. In this case, the spectrum coincides with the set of isomorphism classes of simple submodules. This result holds even for Banach modules as shown in Corollary \ref{cor:banachss}. We finish the section with Proposition \ref{p:1ext}, which shows the behaviour of the spectrum for Lie algebra extensions.

Section \ref{sec:solvable} is devoted to the spectrum for solvable Lie algebras. Although this case was studied before (see \cite{beltita,boasso}), the authors considered mostly infinite-dimensional modules. We explore the spectrum for finite-dimensional modules. It turns out that the spectrum is hard to compute even for the trivial module. An approximation is given by Theorem \ref{t:jordanholder}, which happens to be a refolmutaion in terms of the Taylor spectrum of an earlier result by Millionschikov \cite{mill}. We describe completely the spectrum in case of nilpotent Lie algebras (Theorem \ref{t:nilpspectrum}) and Borel subalgebras of semisimple Lie algebras (Theorem \ref{t:borelspectrum}).

\begin{acknowledgments}
	The author is grateful to his scientific advisor Alexei Pirkovskii for his careful guidance during writing this paper. Thanks should also go to Egor Alimpiev and Marat Rovinsky for language corrections to an early draft and to Simon Wadsley and Yves de Cornulier for useful discussions on the Mathoverflow website (see answers to \cite{MO}).
\end{acknowledgments}

\section{Preliminaries}%
\label{sec:preliminaries}
\subsection{Notation} 
In this article all algebras, including Lie algebras, are defined over the field of complex numbers. We use the notation $U\lieg$ for the enveloping algebra of a Lie algebra $\lieg$. We denote by $\lmod{\lieg}$ and
$\rmod\lieg$ the categories of left and right $\lieg$-modules, respectively.  We write $\hat\lieg$
for the set of isomorphism classes of simple finite-dimensional $\lieg$-modules and $\CC$
for the trivial module. If $S$ is a simple $\lieg$-module we also denote by $S\in \hat\lieg$ its isomorphism class. This will not lead to a confusion.

\subsection{Functors  on categories of modules}
For the rest of this section we denote by $\lieg$ an arbitrary finite-dimensional Lie algebra.
 Let $V$ be a $\lieg$-module. We define vector spaces 
\[
	V^\lieg = \{v\in V\colon g\cdot v = 0 ~ \forall g\in \lieg \}, 
\]
called \textit{invariants} and
\[
	V_\lieg = V/\lieg V,
\]
called \textit{coinvariants}. The assignments $V \mapsto V^\lieg$ and $V\mapsto V_\lieg$ define functors from $\lmod\lieg$ (or $\rmod\lieg$) to the category of vector spaces. These functors are isomorphic to $\Hom_{U\lieg}(\CC, V)$ and $\CC \otimes_{U\lieg} V$ respectively.

We define two functors $\square^*\colon \lmod{\lieg}^{op} \to \lmod{\lieg}$ and
$\square^\circ\colon \lmod{\lieg} \to \rmod{\lieg}$ as follows. The \textit{duality functor} $\square^*$ maps a $\lieg$-module V to its dual vector space with the left $\lieg$-action given by
\[
(g\cdot f) (v) = -f(g\cdot v),~\text{ for all } f\in V^*,~v \in V,~g\in\lieg.
\]
The \textit{antipode functor} $\square^\circ$ maps $V$ to itself as a vector space with the right $\lieg$-action given by
\[
v\cdot g = - g\cdot v,~\text{ for all } v \in V,~g\in\lieg.
\]
The antipode functor defines an equivalence of the categories $\lmod\lieg$ and $\rmod\lieg$. We also denote by $\square^*$ and
$\square^\circ$ the functors on the category of right $\lieg$-modules, defined in the same way. It is easy to see that $(\square^\circ)^\circ$ is naturally
isomorphic to the identity functor. However, $(\square^*)^*$ is naturally isomorphic to the identity functor only on the subcategory of finite-dimensional modules.

Another important functor is $\square \otimes_\CC \square \colon \rmod\lieg \times
\lmod\lieg \to \lmod\lieg$. If $V \in \rmod\lieg$ and $W \in \lmod\lieg$, then $V \otimes_\CC W$
is the tensor product of $V$ and $W$ as vector space with the action of $\lieg$, which is determined by
the formula
\[
g\cdot (v\otimes w) = v\otimes (g\cdot w) - (v\cdot g) \otimes w, \text{ for all } w\in W,~v\in
V,~ g\in \lieg.
\]
For $V, W \in \lmod\lieg$ (resp. $\rmod\lieg$), we denote by $V\otimes W$ the left (resp. right) $\lieg$-module
$V^\circ \otimes_\CC W$ (resp. $V \otimes_\CC W^\circ$).

For $V\in \lmod\lieg$ and and a simple $\lieg$-module $S \in \hat\lieg$, we write $V_{S}$ for the $\lieg$-module
$S\otimes_\CC V$.  If $S$ is one-dimensional, then it can be described by a character $\lambda \in
(\lieg/[\lieg, \lieg])^*$. In this case we write $V_\lambda$ for $V_S$. For example, let $\CC$ be the trivial $\lieg$-module. 
With use of the above notation $\CC_\lambda$ stands for the one-dimensional module with the action given by $g\cdot s = \lambda(g)s$ for
all $s\in \CC_\lambda$ and $g \in \lieg$. We also use the notation $V_{-S}$ for the module
$S^* \otimes_\CC V$. If $S$ is one-dimensional of character $\lambda$, then $V_{-S}$ is isomorphic to $V_{-\lambda}$. 

\subsection{Lie algebra (co)homology}
In this subsection we briefly recall necessary definitions and properties of Lie algebra cohomology. For details, we refer the reader to any related textbook (see for example \cite[Chapter 7]{weibel}).

\begin{definition}
	Let $V$ be a left $\lieg$-module. For all $k\in \ZZ_{\geq0}$, the $k$-th \textit{homology} of $\lieg$ with coefficients in $V$ is defined as 
	\[
		H_k(\lieg, V) = \Tor_k^{U\lieg}(\cmpl, V).
	\]
	Dually, the $k$-th \textit{cohomology} is defined as
	\[
		H^k(\lieg, V) = \Ext^k_{U\lieg}(\CC, V).
	\]
\end{definition}

(Co)homologies are functors from the category $\lmod\lieg$ to the category of vector spaces. They can be computed with the Chevalley-Eilenberg free resolution of the trivial $\lieg$ module $\cmpl$ (see \cite[Definition 7.7.1]{weibel}). It has $F_k=U\lieg\otimes_\CC \bigwedge^k
\lieg$ in degree $k$ with the differential given by
\[
	d(u\otimes g_1 \wedge \cdots \wedge g_p) 
	= \sum_{i = 1}^p (-1)^{i+1} u g_i \otimes g_1 \wedge \cdots \wedge \hat{g}_i\wedge \cdots \wedge g_p  
	+$$ $$+\sum_{i < j} (-1)^{i+j} u\otimes [g_i, g_j] \wedge \cdots \wedge \hat{g}_i\cdots \wedge
	\hat{g}_j\cdots \wedge g_p,~\text{where } u\in U\lieg,~g_i \in \lieg.
\]
We need the following fact about homology.
\begin{lemma} \label{t:torhomology}
	Let $V \in \rmod\lieg$ and $W \in \lmod\lieg$. Then  
	\[
	\Tor_k^{U\lieg}(V,W) \cong \Tor_k^{U\lieg}(\cmpl,V\otimes_\cmpl W) = H_k(\lieg, V\otimes_\cmpl W),
	\]
	for all $k\in \ZZ_{\geq 0}$.
\end{lemma}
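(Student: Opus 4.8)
The plan is to exploit that $U\lieg$ carries a natural Hopf algebra structure, for which, for a fixed $V\in\rmod\lieg$, the functor $V\otimes_\CC(-)\colon\lmod\lieg\to\lmod\lieg$ is exact and preserves projectives; combined with an elementary identification of $\CC\otimes_{U\lieg}(V\otimes_\CC N)$ with $V\otimes_{U\lieg}N$, this reduces both sides of the asserted isomorphism to the homology of one and the same complex.

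First I would note that $V\otimes_\CC(-)$ is exact, since on underlying vector spaces it is tensoring over the field $\CC$ and exactness of a sequence of $\lieg$-modules is tested on underlying vector spaces. The key step is that $V\otimes_\CC(-)$ sends free $U\lieg$-modules to free $U\lieg$-modules. A free left module has the form $U\lieg\otimes_\CC X$ with $\lieg$ acting on the first tensorand, and $V\otimes_\CC(U\lieg\otimes_\CC X)\cong(V\otimes_\CC U\lieg)\otimes_\CC X$, so it is enough to show that $V\otimes_\CC U\lieg$ is free. For this I would write down the map
\[
\Psi\colon U\lieg\otimes_\CC V\longrightarrow V\otimes_\CC U\lieg,\qquad \Psi(u\otimes v)=u\cdot(v\otimes 1),
\]
where the left-hand side is the free module on the vector space $V$ (with $\lieg$ acting on the $U\lieg$-factor) and the action on the right-hand side is the one from the construction in Section \ref{sec:preliminaries}. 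This $\Psi$ is well defined and $U\lieg$-linear because the module action is associative, and it is bijective because, with respect to the filtration on $V\otimes_\CC U\lieg$ induced by the Poincaré--Birkhoff--Witt filtration on $U\lieg$, the map $\Psi$ is filtered and induces the identity on the associated graded: one checks inductively that $\Psi(v\otimes g_1\cdots g_k)=v\otimes g_1\cdots g_k$ modulo terms of lower PBW-order in the $U\lieg$-factor, using $g\cdot(v\otimes 1)=v\otimes g-(v\cdot g)\otimes 1$. Being additive, exact and free-preserving, $V\otimes_\CC(-)$ then also preserves projectives and carries projective resolutions in $\lmod\lieg$ to projective resolutions.

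Now I would fix a projective resolution $P_\bullet\to W$ in $\lmod\lieg$. On the one hand, $\Tor_k^{U\lieg}(V,W)\cong H_k(V\otimes_{U\lieg}P_\bullet)$. On the other hand, by the previous paragraph $V\otimes_\CC P_\bullet\to V\otimes_\CC W$ is a projective resolution in $\lmod\lieg$, so
\[
H_k(\lieg,V\otimes_\CC W)=\Tor_k^{U\lieg}(\CC,V\otimes_\CC W)\cong H_k\bigl(\CC\otimes_{U\lieg}(V\otimes_\CC P_\bullet)\bigr).
\]
It remains to identify the two complexes. For any $N\in\lmod\lieg$ one has
\[
\CC\otimes_{U\lieg}(V\otimes_\CC N)=(V\otimes_\CC N)_\lieg=(V\otimes_\CC N)\big/\bigl\langle\,(v\cdot g)\otimes n-v\otimes(g\cdot n)\,\bigr\rangle,
\]
and the $U\lieg$-submodule $\bigl\langle(v\cdot u)\otimes n-v\otimes(u\cdot n):u\in U\lieg\bigr\rangle$ defining $V\otimes_{U\lieg}N$ is spanned, by an induction on the PBW-degree of $u$, by its degree-one part; hence the two quotients of $V\otimes_\CC N$ coincide, naturally in $N$. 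Applying this with $N=P_\bullet$ gives a chain isomorphism $\CC\otimes_{U\lieg}(V\otimes_\CC P_\bullet)\cong V\otimes_{U\lieg}P_\bullet$, and comparing the two computations yields $\Tor_k^{U\lieg}(V,W)\cong H_k(\lieg,V\otimes_\CC W)$ for all $k$.

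The main obstacle is the freeness claim $V\otimes_\CC U\lieg\cong U\lieg\otimes_\CC V$, i.e. that $V\otimes_\CC(-)$ preserves projectives; this is the ``untwisting'' isomorphism for the Hopf algebra $U\lieg$ and, although standard, is the only non-formal ingredient. Everything else is routine homological algebra: exactness of tensoring over a field, the usual comparison of two computations of $\Tor$, and a direct check that two explicit quotients of $V\otimes_\CC N$ agree.
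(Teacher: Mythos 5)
Your proof is correct, but it takes a different route from the paper. The paper resolves the \emph{right} module $V$ by a flat resolution $P_\bullet\to V$ and shows that $(-)\otimes_\CC W$ preserves flatness; the flatness of $P_k\otimes_\CC W$ is established formally via associativity of the tensor product, writing $(P_k\otimes_\CC W)\otimes_{U\lieg}(-)$ as a composition of exact functors, with no explicit untwisting isomorphism appearing. You instead resolve the \emph{left} module $W$ by projectives and prove that $V\otimes_\CC(-)$ preserves projectivity, making the Hopf-algebra untwisting $U\lieg\otimes_\CC V\cong V\otimes_\CC U\lieg$ explicit via a PBW-filtration argument. Both proofs rely on the natural identification $\CC\otimes_{U\lieg}(V\otimes_\CC N)\cong V\otimes_{U\lieg}N$; the paper asserts it, while you verify it directly by induction on PBW degree. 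The trade-off: the paper's argument is shorter and purely formal, but yours is more self-contained and makes the mechanism (the antipode twist collapsing the diagonal action to a free one) visible, which is arguably more illuminating. Both are standard and equivalent in strength, so this is a matter of exposition rather than a gap.
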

\begin{proof}
	Since the functors $\CC\otimes_{U\lieg}(\square \otimes_\CC W)$ and $\square \otimes_{U\lieg} W$
	are naturally isomorphic, it suffices to show that if $P_\bullet \to V$ is a flat resolution
	of $V$, then $P_\bullet \otimes_\CC W$ is a flat resolution of $V\otimes_\CC W$.
	
	By definition, the flatness of $P_k$ means the exactness of the functor $P_k\otimes_{U\lieg}\square$. We use the
	associativity of the tensor product to obtain a natural isomorphism of functors $(P_k\otimes_\CC W)
	\otimes_{U\lieg} \square$ and $P_k\otimes_{U\lieg} (W \otimes_\CC \square)$. The last functor
	is the composition of exact functors, so it is also exact. If follows that $P_k\otimes_\CC W$ is flat and we obtain the  natural isomorphism $\Tor_k^{U\lieg}(V,W) \cong \Tor_k^{U\lieg}(\cmpl,V\otimes_\cmpl W)$.
\end{proof}

A useful variation of the Poincar\'e duality holds for finite dimensional Lie algebras. Let $n$ be the dimension of $\lieg$. We endow the one-dimensional vector space $\bigwedge^n \lieg$ with the structure of a left
$\lieg$-module, which extends the adjoint action by the Leibnitz rule. That is, 
\[
g\cdot g_1\wedge \cdots \wedge g_n =\sum_{i=1}^n g_1\wedge \cdots \wedge[g, g_i]\wedge \cdots \wedge g_n
\]
for $g, g_i \in \lieg$.

\begin{proposition}[Poincar\'e duality] \label{t:poincare}
	For $0 \leq k \leq n$ and a left $\lieg$-module $V$, there are vector space isomorphisms 
	\[
	H^k(\lieg, V^*) \cong H_k(\lieg, V)^*~ \text{ and }~
	H^k(\lieg, V) \cong H_{n - k}(\lieg, (\bigwedge^n \lieg)^*\otimes_\cmpl V),
	\]
	natural in $V$. Consequently
	\[
	H^k(\lieg, V^*) \cong H^{n - k}(\lieg,  \bigwedge^n \lieg\otimes_\cmpl V)^*. 
	\]
\end{proposition}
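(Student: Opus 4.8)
The plan is to establish the three isomorphisms in order, deriving each from the previous one together with Lemma \ref{t:torhomology} and the standard properties of the duality and tensor functors recalled above. For the first isomorphism $H^k(\lieg, V^*) \cong H_k(\lieg, V)^*$, I would compute both sides using the Chevalley--Eilenberg resolution $F_\bullet \to \CC$ of the trivial module. On one hand, $H_k(\lieg, V) = H_k(\CC \otimes_{U\lieg} F_\bullet \otimes_{U\lieg} \cdots)$ — more precisely $H_k$ of the complex $\bigwedge^\bullet \lieg \otimes_\CC V$ (viewing $V$ as a right module via $\square^\circ$, or directly) with the Chevalley--Eilenberg differential. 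On the other hand, $H^k(\lieg, V^*) = H^k$ of the cochain complex $\Hom_\CC(\bigwedge^\bullet \lieg, V^*)$. The key observation is that $\Hom_\CC(\bigwedge^k \lieg, V^*) \cong \Hom_\CC(\bigwedge^k \lieg, \Hom_\CC(V, \CC)) \cong \Hom_\CC(\bigwedge^k \lieg \otimes_\CC V, \CC) = (\bigwedge^k \lieg \otimes_\CC V)^*$, and that under this identification the cohomology differential is the transpose of the homology differential. Since each $\bigwedge^k \lieg \otimes_\CC V$ is finite-dimensional when $V$ is finite-dimensional, and since $\Hom_\CC(-, \CC)$ is exact, taking duals commutes with taking (co)homology; this gives $H^k(\lieg, V^*) \cong H_k(\lieg, V)^*$ naturally in $V$. (If $V$ is allowed to be infinite-dimensional, the same argument works since $\Hom_\CC(-,\CC)$ is still exact — only the double-dual identification would fail, which is not needed here.)

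For the second isomorphism $H^k(\lieg, V) \cong H_{n-k}(\lieg, (\bigwedge^n \lieg)^* \otimes_\CC V)$, I would exhibit an explicit isomorphism of complexes, up to reindexing, between the Chevalley--Eilenberg cochain complex computing $H^\bullet(\lieg, V)$ and the chain complex computing $H_{n-\bullet}(\lieg, (\bigwedge^n\lieg)^* \otimes_\CC V)$. Concretely, the cochain complex is $C^k = \Hom_\CC(\bigwedge^k \lieg, V) \cong \bigwedge^k \lieg^* \otimes_\CC V$, while the chain complex is $C_{n-k} = \bigwedge^{n-k}\lieg \otimes_\CC (\bigwedge^n\lieg)^* \otimes_\CC V$. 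The point is the canonical (contraction/wedge) isomorphism of $\lieg$-modules $\bigwedge^k \lieg^* \cong \bigwedge^{n-k}\lieg \otimes_\CC (\bigwedge^n \lieg)^*$, which identifies a $k$-covector with the operation of wedging into a fixed top form. One then checks that this identification is compatible with the two differentials (this is the classical computation showing the Chevalley--Eilenberg cochain complex with coefficients in $V$ is, up to the twist by $(\bigwedge^n\lieg)^*$ and reversal of degree, the chain complex with coefficients in $(\bigwedge^n\lieg)^*\otimes_\CC V$ — essentially Poincaré duality at the level of the exterior algebra). This yields the stated natural isomorphism.

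Finally, the third isomorphism follows by combining the first two: apply the second isomorphism to the module $V^*$ in place of $V$ to get $H^k(\lieg, V^*) \cong H_{n-k}(\lieg, (\bigwedge^n\lieg)^* \otimes_\CC V^*)$; then observe $(\bigwedge^n\lieg)^* \otimes_\CC V^* \cong (\bigwedge^n \lieg \otimes_\CC V)^*$ as $\lieg$-modules (using that tensor product of duals is the dual of the tensor product, valid here since $\bigwedge^n\lieg$ is one-dimensional), and apply the first isomorphism in the form $H_{n-k}(\lieg, W)^* \cong H^{n-k}(\lieg, W^*)$ with $W = \bigwedge^n\lieg \otimes_\CC V$ to obtain $H^k(\lieg, V^*) \cong H^{n-k}(\lieg, \bigwedge^n\lieg \otimes_\CC V)^*$.

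I expect the main obstacle to be the second isomorphism: verifying that the algebraic Poincaré-duality isomorphism $\bigwedge^k\lieg^* \cong \bigwedge^{n-k}\lieg \otimes (\bigwedge^n\lieg)^*$ intertwines the Chevalley--Eilenberg differentials (with the required sign) is the one genuinely computational point, since it involves tracking how both the "action" part and the "bracket" part of the differential transform under contraction with a top form. Everything else — naturality, the finite-dimensionality bookkeeping, and the formal consequence in the third line — is routine once the complexes are written down explicitly.
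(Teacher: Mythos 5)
The paper does not prove this proposition; it cites \cite[Theorem 6.10]{knapp}. Your proposal reconstructs what is essentially the standard proof found in such references: compute on the Chevalley--Eilenberg (co)chain complexes, use exactness of $\Hom_\CC(\square,\CC)$ to commute dualization with (co)homology for the first isomorphism, use the contraction/wedge isomorphism $\bigwedge^k\lieg^* \cong \bigwedge^{n-k}\lieg \otimes_\CC (\bigwedge^n\lieg)^*$ for the second, and then combine the two formally for the third. This is the right route, and you correctly single out the sign/differential bookkeeping in the second isomorphism as the only genuinely computational point. Since the paper offers no argument of its own, there is nothing to compare approaches against; yours simply makes the cited result self-contained.

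One small imprecision in the last step: after reducing to $H^k(\lieg, V^*) \cong H_{n-k}(\lieg, W^*)$ with $W = \bigwedge^n\lieg \otimes_\CC V$, you say you apply the first isomorphism ``in the form $H_{n-k}(\lieg, W)^* \cong H^{n-k}(\lieg, W^*)$.'' What the conclusion actually requires is the mirror statement $H_{n-k}(\lieg, W^*) \cong H^{n-k}(\lieg, W)^*$, which is not literally the first isomorphism with $W$ plugged in. It does hold by the same Chevalley--Eilenberg argument, because $\Hom_\CC(\bigwedge^k\lieg, W)^* \cong \bigwedge^k\lieg \otimes_\CC W^*$ (using finite-dimensionality of $\bigwedge^k\lieg$ only), so the chain complex with $W^*$-coefficients is the $\CC$-linear dual of the cochain complex with $W$-coefficients; alternatively, for finite-dimensional $V$ one can substitute $W^*$ into the first isomorphism and use $W^{**}\cong W$ together with finite-dimensionality of the homology to flip the stars. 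Either fix is one line, but as written the deduction conflates two twin-but-distinct statements.
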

\begin{proof}
	\cite[Theorem 6.10]{knapp}.
\end{proof}

\section{Taylor spectrum of \texorpdfstring{$\lieg$-modules}{g-modules}}%
\label{sec:spectrumofmodule}

\subsection{Definition and first properties}
Let $\lieg$ be an arbitrary Lie algebra of dimension $n$ and $V$ be a left $\lieg$-module.  Recall that $\hat\lieg$
is the set of isomorphism classes of simple finite-dimensional $\lieg$-modules.
\begin{definition} \label{d:tspec}
	The \textit{Taylor spectrum} of $V$ is the subset of $\hat\lieg$ defined as
	\[
	\sigma_\lieg(V)=\{ S\in \hat{\lieg} \colon 
	\Tor^{U\lieg}_k(S^*,V)\neq 0 \text{ for some } k\geq 0 \}.
	\]
\end{definition}
We will occasionaly use the notation $\sigma(V)$ instead of $\sigma_\lieg(V)$ when it is clear what $\lieg$ is meant. We will also write \textit{the spectrum} for \textit{the Taylor spectrum}. 

Suppose that $\lieg$ is a solvable Lie algebra. By Lie's theorem, every simple $\lieg$-module is one-dimensional. Thus, $\hat\lieg$ can be identified with the space $(\lieg/[\lieg, \lieg])^*$ of characters. In this case, the Taylor spectrum can be expressed in terms of the spectrum \eqref{e:boassolarotonda}, defined by Boasso and Larotonda in \cite{boasso}:
\[
\sigma_\lieg(V) = -\sigma^{BL}_\lieg(V^\circ).
\]
We may think about $\sigma_\lieg$ as a generilization of $\sigma^{BL}_\lieg$. 

The spectrum has other useful characterizations.
\begin{proposition} \label{p:definitions}
	For an arbitrary $S\in\lieg$, the following are equivalent:
	\begin{enumerate}
		\item $S \in \sigma(V)$; 
		
		\item $H_k(\lieg, V_{-S}) \ne 0$ for some $k$; 
		
		\item $H^k(\lieg, \bigwedge^n\lieg\otimes_\CC V_{-S}) \ne 0$ for some $k$;
		
		\item $S^* \otimes_\CC \bigwedge^n\lieg \in \sigma(V^*)$.
	\end{enumerate}
\end{proposition}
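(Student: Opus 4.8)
The plan is to establish the chain of equivalences $(1)\Leftrightarrow(2)\Leftrightarrow(3)\Leftrightarrow(4)$, moving mostly through straightforward applications of the two lemmas already at hand: Lemma \ref{t:torhomology} (identifying $\Tor$ with Lie algebra homology after tensoring) and Proposition \ref{t:poincare} (Poincar\'e duality). First I would handle $(1)\Leftrightarrow(2)$. By Definition \ref{d:tspec}, $S\in\sigma(V)$ means $\Tor_k^{U\lieg}(S^*,V)\neq 0$ for some $k$. Since $S$ is finite-dimensional, $S^*$ is too, and applying the antipode to view $S^*$ as a right module, Lemma \ref{t:torhomology} gives $\Tor_k^{U\lieg}(S^*,V)\cong H_k(\lieg,(S^*)^\circ\otimes_\CC V)$. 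It remains to identify $(S^*)^\circ\otimes_\CC V$ with $V_{-S}=S^*\otimes_\CC V$ in the notation of the preliminaries; this is precisely the definition of $\otimes$ for left modules via the antipode, so the identification is immediate and $(1)\Leftrightarrow(2)$ follows.

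Next, for $(2)\Leftrightarrow(3)$ I would invoke the second isomorphism in Proposition \ref{t:poincare}, namely $H^k(\lieg,W)\cong H_{n-k}(\lieg,(\bigwedge^n\lieg)^*\otimes_\CC W)$, applied with $W = \bigwedge^n\lieg\otimes_\CC V_{-S}$. Since $(\bigwedge^n\lieg)^*\otimes_\CC\bigwedge^n\lieg$ is the one-dimensional trivial module (the pairing is $\lieg$-equivariant as both factors carry the adjoint-type action with opposite signs), one gets $H^k(\lieg,\bigwedge^n\lieg\otimes_\CC V_{-S})\cong H_{n-k}(\lieg,V_{-S})$. Hence $H^k$ of the former is nonzero for some $k$ exactly when $H_j$ of $V_{-S}$ is nonzero for some $j$, which is condition $(2)$. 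The one genuinely delicate point to get right here is that the tensor-product module structures and the Poincar\'e duality isomorphism are compatible in the sense that the module $(\bigwedge^n\lieg)^*\otimes_\CC\bigwedge^n\lieg$ really is trivial and can be cancelled; this is where I expect to spend the most care, though it is still routine bookkeeping with the sign conventions fixed in the preliminaries.

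Finally, $(3)\Leftrightarrow(4)$: by Definition \ref{d:tspec} applied to $V^*$, the class $S^*\otimes_\CC\bigwedge^n\lieg$ lies in $\sigma(V^*)$ iff $\Tor_k^{U\lieg}\!\big((S^*\otimes_\CC\bigwedge^n\lieg)^*,V^*\big)\neq 0$ for some $k$. Using $(S^*\otimes_\CC\bigwedge^n\lieg)^*\cong S\otimes_\CC(\bigwedge^n\lieg)^*$ for finite-dimensional modules, together with the already-established equivalence $(1)\Leftrightarrow(2)$ now read for the module $V^*$, this is equivalent to $H_k\big(\lieg,\, (\bigwedge^n\lieg)^*\otimes_\CC V^*\otimes_\CC S\big)\neq 0$. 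On the other hand, condition $(3)$ says $H^k(\lieg,\bigwedge^n\lieg\otimes_\CC V_{-S})\neq 0$, and the third displayed isomorphism of Proposition \ref{t:poincare}, $H^k(\lieg,W^*)\cong H^{n-k}(\lieg,\bigwedge^n\lieg\otimes_\CC W)^*$, converts this cohomology of a tensored-dual into the required form after taking $W$ appropriately and using the first Poincar\'e isomorphism $H^k(\lieg,V^*)\cong H_k(\lieg,V)^*$; matching the coefficient modules reduces to the finite-dimensional identities $(S\otimes_\CC V)^*\cong S^*\otimes_\CC V^*$ and the cancellation of $\bigwedge^n\lieg$ against its dual. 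The main obstacle throughout is not any single hard argument but keeping the several functorial identifications — dualization, antipode, tensoring by one-dimensional twists, and the two halves of Poincar\'e duality — consistent; once the conventions are pinned down, each step is a short diagram chase.
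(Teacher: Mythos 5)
Your proposal is correct and follows essentially the same route as the paper: $(1)\Leftrightarrow(2)$ via Lemma~\ref{t:torhomology}, $(2)\Leftrightarrow(3)$ via Poincar\'e duality together with the cancellation of $\bigwedge^n\lieg$ against its dual, and $(3)\Leftrightarrow(4)$ by passing to $V^*$, identifying coefficient modules using finite-dimensionality of $S$ and $\bigwedge^n\lieg$, and invoking the already-proved equivalences. The only (inessential) difference is in the last step: you reformulate $(4)$ through $(1)\Leftrightarrow(2)$ applied to $V^*$ and then close the loop with Poincar\'e duality, whereas the paper applies the third Poincar\'e isomorphism to $(3)$ directly and then uses $(1)\Leftrightarrow(3)$ for $V^*$ — the same tools, arranged in the opposite order.
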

\begin{proof}
	\begin{itemize}
		\item[$1 \Leftrightarrow 2$]
		This follows immediatley from Lemma \ref{t:torhomology}.
		
		\item[$2 \Leftrightarrow 3$]
		This is just the Poincar\'e duality (Proposition \ref{t:poincare}).
		
		\item[$3 \Leftrightarrow 4$]
		Again, by the Poincar\'e duality we have
		\[
		H^k(\lieg, \bigwedge^n\lieg \otimes_\CC V_{-S})^* = H^{n - k} (\lieg, (V_{-S})^*).
		\]
		Observe that $(V_{-S})^*$ is isomorphic to $S\otimes_\CC V^*\cong \bigwedge^n\lieg\otimes_\CC (\bigwedge^n\lieg)^* \otimes_\CC S\otimes V^*\cong \bigwedge^n\lieg \otimes_{\CC}
		(V^*)_{-S^* \otimes_\CC \bigwedge^n\lieg}$. Using the equivalence of (1) and (3) we obtain the desired result. 
	\end{itemize}
\end{proof}

\begin{lemma} \label{l:exactsequence}
	Let $V$, $V'$, and $V''$ be $\lieg$-modules such that there is a short exact sequence
	\[
	0\to V' \to V \to V'' \to 0.
	\]
	Then
	\begin{enumerate}
		\item  $\sigma_\lieg(V)\subset \sigma_\lieg(V')\cup \sigma_\lieg(V'')$.
		
		\item If the short exact sequence splits, i.e., $V\cong V'\oplus V''$ then $\sigma_\lieg(V)=\sigma_\lieg(V')\cup \sigma_\lieg(V'')$.
	\end{enumerate}
\end{lemma}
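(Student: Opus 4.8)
The plan is to deduce both parts from the long exact sequence of $\Tor$ in its second argument. Fix $S\in\hat\lieg$. The short exact sequence $0\to V'\to V\to V''\to 0$ of $U\lieg$-modules induces, by the standard properties of $\Tor$, a long exact sequence
\[
\cdots \to \Tor_k^{U\lieg}(S^*,V') \to \Tor_k^{U\lieg}(S^*,V) \to \Tor_k^{U\lieg}(S^*,V'') \to \Tor_{k-1}^{U\lieg}(S^*,V') \to \cdots;
\]
equivalently, by Lemma \ref{t:torhomology} one may tensor with $S^*$ over $\CC$ (which is exact, as $\CC$ is a field) to get $0\to V'_{-S}\to V_{-S}\to V''_{-S}\to 0$ and pass to the associated long exact sequence in $H_\ast(\lieg,-)$, using part (2) of Proposition \ref{p:definitions}.

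For part (1), I would argue by contraposition. If $S\notin\sigma_\lieg(V')\cup\sigma_\lieg(V'')$ then $\Tor_k^{U\lieg}(S^*,V')=\Tor_k^{U\lieg}(S^*,V'')=0$ for all $k\geq 0$, so exactness of the sequence above forces $\Tor_k^{U\lieg}(S^*,V)=0$ for all $k$, i.e.\ $S\notin\sigma_\lieg(V)$. Letting $S$ range over $\hat\lieg$ yields $\sigma_\lieg(V)\subset\sigma_\lieg(V')\cup\sigma_\lieg(V'')$. For part (2), when the sequence splits we have $V\cong V'\oplus V''$, and since $\Tor_k^{U\lieg}(S^*,-)$ commutes with finite direct sums, $\Tor_k^{U\lieg}(S^*,V)\cong\Tor_k^{U\lieg}(S^*,V')\oplus\Tor_k^{U\lieg}(S^*,V'')$. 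This group is nonzero for some $k$ precisely when one of the two summands is nonzero for some $k$, so $\sigma_\lieg(V)=\sigma_\lieg(V')\cup\sigma_\lieg(V'')$. (The inclusion $\supseteq$ alone could also be recovered from part (1) by viewing $V'$ as a submodule of $V$ with quotient $V''$ and vice versa.)

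I do not expect a genuine obstacle here: this is a formal consequence of $\Tor$ being a homological $\delta$-functor together with its additivity. The only point worth spelling out is the exactness of $S^*\otimes_\CC-$, which is automatic over a field and is what lets one move freely between the $\Tor^{U\lieg}(S^*,-)$ and the $H_\ast(\lieg,(-)_{-S})$ descriptions via Lemma \ref{t:torhomology}.
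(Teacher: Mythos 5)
Your argument is correct and follows essentially the same route as the paper: both pass to the long exact sequence in homology (equivalently, in $\Tor^{U\lieg}(S^*,-)$) for part (1), and for part (2) both invoke additivity of the derived functor on a split sequence, which you phrase via $\Tor$ commuting with finite direct sums and the paper phrases via vanishing connecting maps. No gap.
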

\begin{proof}
	The tensor product over $\CC$ is exact. So for any simple $\lieg$-module $S$ the sequnce
	\[
	0\to V'_{-S} \to V_{-S} \to V''_{-S} \to 0
	\]
	is exact.
	It induces the homology long exact sequence
	\[
	\xrightarrow{\partial} H_k(V'_{-S}) \to H_k(V_{-S}) \to H_k(V''_{-S}) \xrightarrow{\partial} H_{k-1}(V'_{-S}) \to.
	\]
	If $H_k(V'_{-S})=H_k(V''_{-S})=0$ for all $k$, then $H_k(V_{-S})$ are also zero. This proves (1).
	
	For (2) observe that the connecting homomorphisms $\partial$ are all zero if the exact sequence splits. So, $H_k(V_{-S}) = H_k(V'_{-S})\oplus H_k(V''_{-S})$ and $\sigma_{\lieg}(V)=\sigma_\lieg(V')\cup \sigma_\lieg(V'')$.
\end{proof}

\subsection{Semisimple Lie algebras}%
\label{sec:semisimple}
Let $\lieg$ be a semisimple Lie algebra of dimension $n$. Note that $\bigwedge^n
\lieg$ is isomorphic to the trivial $\lieg$-module. It follows from Proposition \ref{p:definitions} that the Taylor spectrum of a $\lieg$-module $V$ can be described as the set
\[
\sigma_\lieg(V) = \{ S \in \hat\lieg \colon H^k(\lieg, V_{-S})\neq 0 \text{ for some } k\}.
\]
The next lemma helps us to compute the spectrum for finite-dimensional $\lieg$-modules. 
\begin{lemma} \label{l:cohomologyofsemisimple}
	Let $S$ be a simple $\lieg$-module. Assume that $S\not\cong \CC$. Then 
	\[
	H^k(\lieg, S) = 0 \text{ for all } k.
	\]
\end{lemma}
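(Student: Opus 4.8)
The plan is to run the classical Casimir argument degree by degree. Since $\lieg$ is semisimple, write it as a direct sum of simple ideals $\lieg = \lieg_1\oplus\cdots\oplus\lieg_m$, and for each $j$ let $C_j\in U\lieg_j\subset U\lieg$ be the Casimir element built from the Killing form of $\lieg_j$. Each $C_j$ is central in $U\lieg_j$ and commutes with $U\lieg_i$ for $i\neq j$ (elements of distinct ideals commute), so $C := C_1+\cdots+C_m$ is a central element of $U\lieg$. First I would record that $C$ annihilates the trivial module $\CC$, since every element of every $\lieg_j$ does.

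Next I would show that $C$ acts on the finite-dimensional simple module $S$ by a nonzero scalar whenever $S\not\cong\CC$. By Schur's lemma (here $\CC$ is algebraically closed and $S$ is finite-dimensional) the action of the central element $C$ on $S$ is multiplication by some scalar $c$. Decomposing $S$ as an external tensor product $S_1\boxtimes\cdots\boxtimes S_m$ of simple $\lieg_j$-modules, one has $c=\sum_j c_j$, where $c_j$ is the scalar by which $C_j$ acts on $S_j$. The standard formula for the Casimir eigenvalue on an irreducible module of highest weight $\lambda_j$ gives $c_j = \langle\lambda_j,\lambda_j+2\rho_j\rangle\ge 0$, with equality precisely when $\lambda_j=0$, i.e. when $S_j$ is trivial. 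Since $S\not\cong\CC$, some $S_j$ is nontrivial, hence $c=\sum_j c_j>0$.

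Finally I would invoke the bifunctoriality of $\Ext$: for any ring $R$, a central element induces the \emph{same} endomorphism of $\Ext^k_R(M,N)$ whether one uses its action on the first or on the second argument. Applying this to $C\in Z(U\lieg)$ and $H^k(\lieg,S)=\Ext^k_{U\lieg}(\CC,S)$: the action through $\CC$ is $0$, while the action through $S$ is multiplication by $c$, so $c\cdot\mathrm{id}=0$ on $H^k(\lieg,S)$. As $c\neq 0$, this forces $H^k(\lieg,S)=0$ for every $k$.

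The step I expect to require the most care is the non-vanishing of the Casimir eigenvalue on a nontrivial simple module, in particular because $\lieg$ need not be simple and $S$ need not be a faithful $\lieg$-module; the passage to simple ideals above is exactly what handles this. An alternative route avoiding the highest-weight formula: if $S$ is faithful, the trace form $B(x,y)=\operatorname{tr}_S(xy)$ is a nondegenerate invariant form and the Casimir $C_B$ it defines satisfies $\operatorname{tr}_S(C_B)=\dim\lieg\neq 0$, so $C_B$ acts on $S$ by a nonzero scalar; the general case then follows by splitting off the ideal $\ker(\lieg\to\mathfrak{gl}(S))$ (a sum of simple ideals, acting trivially) and using the K\"unneth formula for Lie algebra cohomology.
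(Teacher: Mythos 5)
The paper proves this lemma by simply citing Weibel's Theorem~7.8.9 (Whitehead's vanishing lemma), whose proof is exactly the Casimir-element argument you give. Your write-up is correct and essentially the same: the reduction to simple ideals properly handles the case where $S$ is not faithful, the Casimir eigenvalue computation via $\langle\lambda_j,\lambda_j+2\rho_j\rangle$ is right, and the observation that a central element of $U\lieg$ induces the same endomorphism of $\Ext^k_{U\lieg}(\CC,S)$ through either argument is the standard way to conclude.
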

\begin{proof}
	\cite[Theorem 7.8.9]{weibel}
\end{proof}

If the module is trivial, then, obviously, $H^0(\lieg, \CC) \cong \CC$. The following theorem gives a complete description of the spectrum for semisimple Lie algebras.
\begin{theorem} \label{t:ssspectrumissubmodules}
	Let $V$ be a finite-dimensional $\lieg$-module. The Taylor spectrum of $V$ coincides with the set of
	simple components of $V$:
	\begin{equation} \label{e:specforss}
		\sigma_\lieg(V) = \{S \in \hat\lieg \colon S \text{ is isomorphic to a submodule of } V\}.
	\end{equation}
\end{theorem}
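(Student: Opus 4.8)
The plan is to reduce everything to the cohomological characterization $\sigma_\lieg(V) = \{S : H^k(\lieg, V_{-S}) \neq 0 \text{ for some } k\}$ established above using Proposition \ref{p:definitions} and the fact that $\bigwedge^n\lieg \cong \CC$ for semisimple $\lieg$. Since $\lieg$ is semisimple, every finite-dimensional module is completely reducible, so we may write $V \cong \bigoplus_i S_i^{\oplus m_i}$ with the $S_i$ pairwise non-isomorphic simple modules and $m_i \geq 1$. By Lemma \ref{l:exactsequence}(2), the spectrum is additive on direct sums, so $\sigma_\lieg(V) = \bigcup_i \sigma_\lieg(S_i)$. Thus it suffices to prove that for a simple module $S$ and any simple module $T \in \hat\lieg$, we have $T \in \sigma_\lieg(S)$ if and only if $T \cong S$.

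First I would observe that $T \in \sigma_\lieg(S)$ means $H^k(\lieg, S_{-T}) = H^k(\lieg, T^* \otimes_\CC S) \neq 0$ for some $k$, where $S_{-T} = T^* \otimes_\CC S$. The module $T^* \otimes_\CC S$ is again finite-dimensional and completely reducible, so it decomposes as a direct sum of simple modules. By additivity of cohomology over direct sums and Lemma \ref{l:cohomologyofsemisimple}, the cohomology $H^k(\lieg, T^* \otimes_\CC S)$ is nonzero for some $k$ if and only if the trivial module $\CC$ appears as a direct summand of $T^* \otimes_\CC S$ (and in that case $H^0$ already detects it, but we do not even need to track the degree). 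So the whole statement reduces to the elementary claim: for simple finite-dimensional $\lieg$-modules $S$ and $T$, the trivial module $\CC$ is a summand of $T^* \otimes_\CC S$ if and only if $S \cong T$.

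For this last claim I would use that $\Hom_\lieg(T, S) \cong (T^* \otimes_\CC S)^\lieg = \Hom_\lieg(\CC, T^* \otimes_\CC S)$, which is the multiplicity of $\CC$ in $T^* \otimes_\CC S$ since the module is semisimple. By Schur's lemma, $\Hom_\lieg(T, S)$ is one-dimensional if $S \cong T$ and zero otherwise. This gives exactly $T \in \sigma_\lieg(S) \iff S \cong T$, and combining with the direct-sum decomposition yields \eqref{e:specforss}: $S$ lies in $\sigma_\lieg(V)$ precisely when $S$ is isomorphic to one of the $S_i$, i.e., when $S$ embeds as a submodule of $V$.

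The argument is essentially routine once the reduction to cohomology is in place; the only point requiring care is the bookkeeping around the dual and antipode functors, namely checking that $S_{-T}$ really is $T^* \otimes_\CC S$ and that $(T^* \otimes_\CC S)^\lieg$ computes $\Hom_\lieg(T,S)$ rather than $\Hom_\lieg(S,T)$ — but for the final biconditional this asymmetry is harmless since $S \cong T \iff T \cong S$. I do not anticipate a genuine obstacle; the substantive input is entirely contained in Lemma \ref{l:cohomologyofsemisimple} (vanishing of cohomology of nontrivial simples) together with complete reducibility, both of which are available.
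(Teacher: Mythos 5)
Your proof is correct and follows essentially the same route as the paper's: reduce via complete reducibility and Lemma \ref{l:exactsequence}(2) to the case of simple modules, invoke Lemma \ref{l:cohomologyofsemisimple} to see that nonvanishing cohomology is equivalent to the trivial module appearing as a summand of $T^*\otimes_\CC S$, and then identify its multiplicity with $\dim\Hom_\lieg(T,S)$ via Schur's lemma. The only cosmetic difference is that you decompose $V$ fully at the outset rather than first assuming $V$ simple, and you make the use of Schur's lemma explicit where the paper leaves it implicit in the line ``$(V_{-S})^\lieg\cong\Hom_\lieg(S,V)$, so it is one-dimensional if $S\cong V$ and zero otherwise.''
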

\begin{proof}
	By Lemma \ref{l:exactsequence}(2), the spectrum of $V$ is the union of spectra of its simple submodules, so we may assume that $V$ is simple. Let $S$ be a simple $\lieg$-module. Let $V_{-S}=\oplus_{i=1}^m S_i$ be a simple decomposition of $V_{-S}$. Then we have $H^k(\lieg, V_{-S}) \cong \oplus_{i=1}^m H^k(\lieg, S_i)$. By Lemma \ref{l:cohomologyofsemisimple}, $H^k(\lieg, V_{-S})$ is nonzero for some $k$ if and only if $S_i\cong \CC$ for some $i$. On the other hand, the module $V_{-S}$ is isomorphic to $\Hom_\CC(S, V)$ and
	$(V_{-S})^\lieg\cong\Hom_\lieg(S, V)$, so it is one-dimensional if $S\cong V$ and zero
	otherwise. It follows that $S$ lies in the spectrum of $V$ if and only if it is isomorphic to a submodule of $V$.
\end{proof}

In fact, formula \eqref{e:specforss} holds even for Banach $\lieg$-modules. We need the following fact to prove this.
\begin{proposition}
	Any Banach module $V$ over a semisimple Lie algebra $\lieg$ is the union of its
	finite-dimensional submodules. 
\end{proposition}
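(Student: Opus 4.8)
The plan is to prove the stronger statement that the image of $U\lieg$ in the algebra $B(V)$ of bounded operators on $V$ is finite-dimensional; granting this, if $N$ denotes its dimension then every $v\in V$ lies in the submodule $U\lieg\cdot v$, which has dimension at most $N$, and $V$ is the union of these submodules. Fix the representation $\rho\colon\lieg\to B(V)$, a Cartan subalgebra $\lieh$, a triangular decomposition $\lieg=\lien^-\oplus\lieh\oplus\lien^+$ with root system $\Phi$, a nonzero root vector $e_\beta\in\lieg_\beta$ for each $\beta\in\Phi$, and the simple coroots $h_1,\dots,h_r$, which form a basis of $\lieh$.

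The crucial step, and the only one in which the Banach structure is used, is the following claim: if $x\in\lieg$ is $\ad$-nilpotent then $\rho(x)$ is a nilpotent operator. To see this, assume $x\neq 0$; by the Jacobson--Morozov theorem $x$ is the nilpositive element of an $\mathfrak{sl}_2$-triple $(x,h,y)$, so that $X:=\rho(x)$ and $H:=\rho(h)$ are bounded operators satisfying $HX-XH=2X$. An easy induction on $n$ gives $X^nH-HX^n=-2nX^n$, whence
\[
2n\,\|X^n\|=\|X^nH-HX^n\|\le 2\|H\|\,\|X^n\|,
\]
which forces $X^n=0$ as soon as $n>\|H\|$. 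Since each root vector $e_\beta$ is $\ad$-nilpotent (the operator $\ad e_\beta$ shifts the root grading, which is bounded), there is one $p$ with $\rho(e_\beta)^p=0$ for all $\beta\in\Phi$.

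Next I would bound the images of $U\lien^+$, $U\lien^-$ and $U\lieh$ separately. For $\lien^{\pm}$: the associated graded of $U\lien^{\pm}$ is the symmetric algebra $S(\lien^{\pm})$ by the Poincar\'e--Birkhoff--Witt theorem, and $\rho$ kills the $p$-th power of every basis vector $e_\beta$ of $\lien^{\pm}$; hence the image of $U\lien^{\pm}$ in $B(V)$ is a quotient of $U\lien^{\pm}$ modulo the two-sided ideal generated by the $e_\beta^{\,p}$, and its associated graded is a quotient of $S(\lien^{\pm})/(\overline{e}_\beta^{\,p})_\beta$; the latter is finite-dimensional (of dimension $p^{\dim\lien^{\pm}}$). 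For $\lieh$ I would pass through $\mathfrak{sl}_2$: restricting $\rho$ to the subalgebra $\langle e_{\alpha_i},h_i,e_{-\alpha_i}\rangle\cong\mathfrak{sl}_2$ gives an $\mathfrak{sl}_2$-module on which $e_{\alpha_i}$ and $e_{-\alpha_i}$ act nilpotently, hence locally nilpotently; such a module is a direct sum of finite-dimensional irreducibles by the structure theory of integrable $\mathfrak{sl}_2$-modules, so $\rho(h_i)$ is diagonalizable, and, being bounded, has finitely many eigenvalues and is therefore annihilated by a polynomial. Because $U\lieh\cong\CC[h_1,\dots,h_r]$ and the pairwise commuting operators $\rho(h_1),\dots,\rho(h_r)$ are all algebraic, the image of $U\lieh$ in $B(V)$ is finite-dimensional.

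Finally, by the Poincar\'e--Birkhoff--Witt theorem the multiplication map $U\lien^-\otimes_\CC U\lieh\otimes_\CC U\lien^+\to U\lieg$ is surjective, so the image of $U\lieg$ in $B(V)$ is the span of all products of three operators, one drawn from each of the (finite-dimensional) images of $U\lien^-$, $U\lieh$ and $U\lien^+$; such a span is itself finite-dimensional, which finishes the argument. The essential observation is the norm estimate of the second step, and the main obstacle to a fully self-contained write-up is the $\mathfrak{sl}_2$ input used for $\lieh$ — namely that a module on which $e$ and $f$ act nilpotently is semisimple with bounded highest weights (equivalently, that $U\mathfrak{sl}_2$ modulo the two-sided ideal generated by $e^p$ and $f^p$ is finite-dimensional); everything else is bookkeeping with the PBW theorem.
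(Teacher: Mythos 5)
The paper's own treatment of this proposition is a citation to \cite[Corollary 5, \S30]{beltita}, so your self-contained argument takes a genuinely different route, and in fact proves more: the image of $U\lieg$ in the bounded operators on $V$ is finite-dimensional, so the cyclic submodules $U\lieg\cdot v$ have uniformly bounded dimension. The step where the Banach structure enters — that a bounded operator $X$ satisfying $HX-XH=2X$ with $H$ bounded is nilpotent, combined with Jacobson--Morozov to apply this to all root vectors — is correct, as is the passage to the associated graded for $U\lien^{\pm}$ and the final PBW assembly from the triangular decomposition.

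The one genuine gap, which you flag yourself, is the $\lieh$ step. You invoke the claim that an $\mathfrak{sl}_2$-module on which $e$ and $f$ act nilpotently is a direct sum of finite-dimensional irreducibles of bounded highest weight, or equivalently that $U\mathfrak{sl}_2/(e^p,f^p)$ is finite-dimensional. This is true, but it is not an instance of Weyl's complete reducibility theorem, since $V$ is infinite-dimensional a priori; it is the rank-one case of the integrability lemma from Kac--Moody theory and requires its own proof or a precise citation, without which the argument is incomplete. A small additional remark: once you have the decomposition into irreducibles of dimension at most $p$, the eigenvalues of $\rho(h_i)$ lie in $\{-(p-1),\dots,p-1\}$ automatically, so your second appeal to the boundedness of $\rho(h_i)$ at that point is redundant.
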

\begin{proof}
	 \cite[Corollary 5, \S30]{beltita}
\end{proof}
In other words, $V$ is the colimit of the filtered diagram $\mathfrak{V}$ of its finite-dimensional submodules. Recall that the homology and the tensor product commute with filtered
colimits. Thus, we can strengthen Theorem \ref{t:ssspectrumissubmodules}.
\begin{corollary} \label{cor:banachss}
	The Taylor spectrum of a Banach $\lieg$-module $V$ coincides with the set of isomorphism classes of simple submodules of $V$. 
\end{corollary}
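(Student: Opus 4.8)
The plan is to bootstrap from the finite-dimensional case (Theorem \ref{t:ssspectrumissubmodules}) using the presentation of $V$ as a filtered colimit of its finite-dimensional submodules. Write $\mathfrak{V}$ for the directed poset of finite-dimensional submodules of $V$ (directed because a sum of two finite-dimensional submodules is again one). Since $\Tor$ commutes with filtered colimits, for every simple $S\in\hat\lieg$ and every $k\geq 0$ there is a natural isomorphism
\[
\Tor_k^{U\lieg}(S^*, V)\;\cong\;\varinjlim_{W\in\mathfrak{V}}\Tor_k^{U\lieg}(S^*, W).
\]
So the task is to show this colimit is nonzero for some $k$ exactly when $S$ is isomorphic to a submodule of $V$.

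For the inclusion $\sigma_\lieg(V)\subseteq\{\text{simple submodules of }V\}$, I would argue: if $S\in\sigma_\lieg(V)$ then the colimit is nonzero for some $k$; a filtered colimit all of whose terms vanish is zero, so some finite-dimensional submodule $W$ has $\Tor_k^{U\lieg}(S^*, W)\neq 0$, i.e.\ $S\in\sigma_\lieg(W)$, and Theorem \ref{t:ssspectrumissubmodules} produces an embedding $S\hookrightarrow W\hookrightarrow V$.

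For the reverse inclusion I would use that $S$, being finite-dimensional, already occurs (up to isomorphism) as an object of $\mathfrak{V}$ once it embeds in $V$, and that $\Tor_k^{U\lieg}(S^*, S)\neq 0$ for some $k$ by Theorem \ref{t:ssspectrumissubmodules}. The point that needs care --- and the main obstacle --- is that nonvanishing of the individual groups $\Tor_k^{U\lieg}(S^*, W)$ could a priori be destroyed in the colimit. This is ruled out by complete reducibility: every inclusion $W\hookrightarrow W'$ of finite-dimensional $\lieg$-modules splits, hence is sent by the additive functor $\Tor_k^{U\lieg}(S^*,-)$ to a split monomorphism; all transition maps in the colimit system are therefore injective, the canonical maps $\Tor_k^{U\lieg}(S^*, W)\to\varinjlim$ are injective, and the colimit contains the nonzero space $\Tor_k^{U\lieg}(S^*, S)$. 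Thus $\Tor_k^{U\lieg}(S^*, V)\neq 0$ and $S\in\sigma_\lieg(V)$.

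As an alternative that sidesteps the colimit bookkeeping: the cited proposition shows $V$ is the sum of its finite-dimensional submodules, each of which is semisimple, so $V$ itself is semisimple, say $V\cong\bigoplus_{i\in I}S_i$ with each $S_i$ simple; since $\Tor$ commutes with arbitrary direct sums, $\Tor_k^{U\lieg}(S^*, V)=\bigoplus_i\Tor_k^{U\lieg}(S^*, S_i)$, whence $\sigma_\lieg(V)=\bigcup_i\sigma_\lieg(S_i)$, and Theorem \ref{t:ssspectrumissubmodules} identifies this with the set of isomorphism classes of simple submodules of $V$. I would present the first argument, since it only invokes what the preceding paragraph already recalled, and mention the second as a remark.
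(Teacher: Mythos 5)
Your main argument is essentially the paper's: realize $V$ as the filtered colimit of its finite-dimensional submodules, use that $\Tor$ commutes with filtered colimits, and observe the transition maps stay injective so no nonvanishing is lost in the colimit. You actually supply a justification the paper elides --- the paper merely asserts that inclusions in $\mathfrak{V}$ induce inclusions on homology, whereas you correctly note this needs the inclusions to split (which they do, by complete reducibility of finite-dimensional modules over $\lieg$); your alternative via semisimplicity of $V$ and additivity of $\Tor$ over arbitrary direct sums is a clean variant but not what the paper does.
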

\begin{proof}
	The filtered diagram $\mathfrak{V}$ of modules induces the filtered diagram $H_*(\lieg, \mathfrak{V}_{-S})$ of vector spaces. Moreover, $H_*(\lieg, V_{-S})$ is the colimit of $H_*(\lieg, \mathfrak{V}_{-S})$. Since all the maps in $\mathfrak{V}$ are inclusions, so are the maps in $H_*(\lieg, \mathfrak{V}_{-S})$. Thus, $H_*(\lieg, V_{-S})$ is nonzero if and only if $S \in \sigma(W)$ for some $W\subset V$. This means that $S \in \sigma(V)$ if and only if $S$ is isomorphic to a submodule of $V$.
\end{proof}

\subsection{Extensions of Lie algebras}
Let $\lieh$ be an arbitrary Lie algebra and $\lambda$ be a character of $\lieh$. By a \textit{one-dimensional extension} of $\lieh$ we mean an exact
sequence of Lie algebras
\[
0 \to \CC_\lambda\rightarrow \lieg \xrightarrow\pi \lieh \to 0.
\]
Here $\CC_\lambda$ is an ideal of $\lieg$ and the commutator in $\lieg$ satisfies $[g,c] = \lambda(\pi(g))c$ for all $g \in \lieg$ and $c \in \CC_\lambda$.  The fundamental result of the Lie algebra cohomology theory states that isomorphism classes of such extensions are in one-to-one
correspondence with the set $H^2(\lieh,\CC_\lambda) = \Ext^2_{U\lieh}(\cmpl, \cmpl_\lambda)$. We recall how to construct the bijection. Let $\xi \in Hom_\CC(\bigwedge^2\lieh, \CC_\lambda)$ be a
cocycle. We define a Lie bracket on the vector space $\lieg =   \lieh \oplus \CC_\lambda$ by 
\[
[(h_1, c_1), (h_2, c_2)]_\xi= ([h_1, h_2], \lambda(h_1) c_2 - \lambda(h_2) c_1 +
\xi(h_1\wedge h_2)), 
\]
for all $h_i \in \lieh,~c_i\in \CC_\lambda$. It can be shown that this formula satisfies the axioms of a Lie bracket and that cohomologous cocycles induce isomorphic extensions (see \cite[Theorem 7.6.3]{weibel}).

We adapt the techniques for computing cohomology of Lie algebra extensions from \cite[Chapter II.\S6]{guichardet}. From now on and till the end of the section we write $\lieg$ for a one-dimensional extension
of $\lieh$, represented by a cocycle $\xi \in Hom_\CC(\bigwedge^2\lieh, \CC_\lambda)$. We also use
the notation $\mathfrak{c}$ for the ideal $\CC_\lambda\subset \lieg$. Let $V$ be a $\lieh$-module.
We denote by $V^\pi$ the $\lieg$-module which is obtained from $V$ via the homomorphism $\pi$. 
Note that if $S \in \hat\lieh$ is an ireducible $\lieh$-module, then $S^\pi$ is also irreducible as a $\lieg$-module. We use this to identify $\hat\lieh$ with the subset of $\hat\lieg$. 

Assume that we have an $\lieh$-module $V$ and we want to compute the spectrum $\sigma_\lieg (V^\pi)$. The following proposition gives us some clues.
\begin{proposition} \label{p:1ext}
	Let $\lieg$ and $V$ be as above. Then 
	\begin{enumerate}
		\item $\sigma_\lieg(V^\pi) \subset \hat\lieh \subset \hat\lieg$;
		
		
		\item If $S\in \sigma_\lieg(V^\pi)$, then either $S \in \sigma_\lieh(V)$ or
		$S_{-\lambda}\in \sigma_\lieh(V)$;
		
		\item If the extension is central, i.e., $\lambda=0$, then $\sigma_\lieg(V^\pi) = \sigma_\lieh(V)$.
	\end{enumerate}
\end{proposition}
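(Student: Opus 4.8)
The plan is to run the homological Hochschild--Serre spectral sequence for the ideal $\mathfrak{c}\subset\lieg$: for a $\lieg$-module $M$ it reads $E^2_{pq}=H_p(\lieh,H_q(\mathfrak{c},M))\Rightarrow H_{p+q}(\lieg,M)$, where $H_q(\mathfrak{c},M)$ carries its natural $\lieg/\mathfrak{c}=\lieh$-module structure; this is the homological version of the construction in \cite[Chapter II.\S6]{guichardet} (compare \cite[\S7.5]{weibel}). Since $\mathfrak{c}$ is one-dimensional, $H_q(\mathfrak{c},M)=0$ for $q\geq 2$, while $H_0(\mathfrak{c},M)=M/\mathfrak{c}M$ and $H_1(\mathfrak{c},M)\cong\mathfrak{c}\otimes_\CC\ker(c\colon M\to M)$, where $c$ spans $\mathfrak{c}$. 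Hence the spectral sequence is concentrated in the two rows $q\in\{0,1\}$ and degenerates to a long exact sequence
\[
\cdots\to H_{p-1}\bigl(\lieh,H_1(\mathfrak{c},M)\bigr)\to H_p(\lieg,M)\to H_p\bigl(\lieh,H_0(\mathfrak{c},M)\bigr)\xrightarrow{d^2}H_{p-2}\bigl(\lieh,H_1(\mathfrak{c},M)\bigr)\to\cdots,
\]
so the whole proof reduces to identifying the two outer homology theories for $M=(V^\pi)_{-S}$ and reading off this sequence.

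For (1) I would apply the sequence to $M=(V^\pi)_{-S}=S^*\otimes_\CC V^\pi$ for an arbitrary $S\in\hat\lieg$. Since $\mathfrak{c}=\ker\pi$ acts trivially on $V^\pi$, the element $c$ acts on $M$ through the first factor only, as $(c|_{S^*})\otimes\mathrm{id}$. Because $cS$ and $\ker(c|_S)$ are $\lieg$-submodules of $S$ (using $[g,c]\in\mathfrak{c}$), simplicity forces $c$ to act on $S$, hence on $S^*$ and on $M$, either as $0$ or invertibly. In the invertible case $H_\ast(\mathfrak{c},M)=0$, so every $E^2_{pq}$ vanishes and $S\notin\sigma_\lieg(V^\pi)$. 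Therefore each $S$ in the spectrum is annihilated by $\mathfrak{c}$, i.e.\ its action factors through $\lieh$, so $S\cong S_0^\pi$ with $S_0\in\hat\lieh$; this is exactly $\sigma_\lieg(V^\pi)\subset\hat\lieh$.

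For (2) I would write $S=S_0^\pi$ with $S_0\in\hat\lieh$ and put $N=V_{-S_0}$. Duality and $\otimes_\CC$ commute with pullback along $\pi$, so $(V^\pi)_{-S}\cong N^\pi$, on which $\mathfrak{c}$ acts trivially; hence $H_0(\mathfrak{c},N^\pi)\cong N$ as an $\lieh$-module and $H_1(\mathfrak{c},N^\pi)\cong\mathfrak{c}\otimes_\CC N$. The key point is that $\lieh$ acts on $\mathfrak{c}$ through the character $\lambda$ (from $[g,c]=\lambda(\pi(g))c$), so $\mathfrak{c}\otimes_\CC N\cong N_\lambda$; unwinding the conventions of Section~\ref{sec:preliminaries} rewrites $(V_{-S_0})_\lambda$ as $V_{-(S_0)_{-\lambda}}$, which under $\hat\lieh\subset\hat\lieg$ is $V_{-S_{-\lambda}}$. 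Feeding $H_0$ and $H_1$ into the long exact sequence, $H_p(\lieg,(V^\pi)_{-S})$ is sandwiched between $H_{p-1}(\lieh,V_{-S_{-\lambda}})$ and $H_p(\lieh,V_{-S})$, so its non-vanishing for some $p$ forces $S\in\sigma_\lieh(V)$ or $S_{-\lambda}\in\sigma_\lieh(V)$. For (3), when $\lambda=0$ part (2) already gives $\sigma_\lieg(V^\pi)\subset\sigma_\lieh(V)$ (now $S_{-\lambda}=S$); for the reverse inclusion, note that $\lambda=0$ makes the two rows coincide, $H_q(\mathfrak{c},N^\pi)\cong N=V_{-S_0}$ for $q\in\{0,1\}$, so the long exact sequence becomes
\[
\cdots\to H_p\bigl(\lieg,(V^\pi)_{-S}\bigr)\to H_p(\lieh,V_{-S_0})\xrightarrow{d^2}H_{p-2}(\lieh,V_{-S_0})\to H_{p-1}\bigl(\lieg,(V^\pi)_{-S}\bigr)\to\cdots.
\]
If $H_\ast(\lieg,(V^\pi)_{-S})=0$, then $d^2$ is an isomorphism $H_p(\lieh,V_{-S_0})\xrightarrow{\sim}H_{p-2}(\lieh,V_{-S_0})$ for all $p$; since $H_p(\lieh,V_{-S_0})=0$ for $p>\dim\lieh$, a downward induction forces vanishing in every degree. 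Contrapositively $S_0\in\sigma_\lieh(V)$ implies $S_0^\pi\in\sigma_\lieg(V^\pi)$, which together with (1) and (2) yields the equality.

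The one delicate point, as usual in this circle of ideas, is the identification of the $\lieh$-module structure on $H_1(\mathfrak{c},M)$ — specifically that it is genuinely twisted by the character $\lambda$ — together with tracking the various dualizations so that this twisted module comes out as $V_{-S_{-\lambda}}$ rather than $V_{-S_{\lambda}}$; everything else is a formal consequence of the (degenerate) Hochschild--Serre spectral sequence.
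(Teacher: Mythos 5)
Your proof is correct and follows essentially the same route as the paper: both run the homological Hochschild--Serre spectral sequence for the one-dimensional ideal $\mathfrak{c}$, use simplicity of $S$ to show $H_\ast(\mathfrak{c},(V^\pi)_{-S})=0$ when $S\notin\hat\lieh$ for (1), identify the two nonzero rows $H_0(\mathfrak{c},\cdot)\cong V_{-S}$ and $H_1(\mathfrak{c},\cdot)\cong V_{-S_{-\lambda}}$ (with the $\lambda$-twist coming from the $\lieh$-action on $\mathfrak{c}$) for (2), and for (3) observe that in the central case vanishing of $H_\ast(\lieg,\cdot)$ would force all $d^2$ differentials to be isomorphisms, which is impossible once some $H_k(\lieh,V_{-S_0})\neq0$ — you argue this by downward induction from the top degree, the paper by taking the minimal nonvanishing degree, but these are the same observation.
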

\begin{proof}
	Let $S \in \hat\lieg \setminus \hat\lieh$. It is easy to verify that $\mathfrak{c}S=\{c\cdot s
	\colon s \in S,~ c \in \mathfrak{c}\}$ is a submodule of $S$. Since $S$ is irreducible, it is
	either a zero submodule or the whole $S$. If it is zero, then $S$ is actually an $\lieh$-module and it contradicts
	our assumption. Thus, $\mathfrak{c}S = S$ and $S_\mathfrak{c} = 0$. The same argument can be used
	to show that $S^\mathfrak{c}=0$. 
	
	The Hochschild-Serre spectral sequence
	$E^2_{p, q} = H_p(\lieh, H_q(\mathfrak{c}, V_{-S}))$ converges to $H_k(\lieg, V_{-S})$ (see \cite[7.5]{weibel}). It
	suffices to prove that $H_q(\mathfrak{c}, V_{- S})= 0$ for all $q\geq 0$. The only possible nonzero homology
	groups are $H_0(\mathfrak{c}, V_{-S}) = (V_{-S})_\mathfrak{c} \cong (S^*)_\mathfrak{c}.
	\otimes_\CC V = 0$ and $H_1(\mathfrak{c}, V_{-S}) = (V_{-S})^\mathfrak{c} \cong
	(S^*)^\mathfrak{c} \otimes_\CC V = 0$ since $V$ is $\mathfrak{c}$-invariant. Therefore $S$ is not in $\sigma_\lieg(V^\pi)$ by Proposition \ref{p:definitions}. We have proved (1).
	
	Suppose now that $S \in \hat\lieh$. Then the $\lieh$-modules $H_0(\mathfrak{c},
	V_{-S})$ and $H_1(\mathfrak{c}, V_{-S})$ are isomorphic to $V_{-S}$ and $V\otimes_\CC
	\CC_\lambda$ respectively. Since $H_k(\mathfrak{c}, V_{-S})=0$ for $k>1$, the Hochschild-Serre spectral sequence stabilizes on the third page. The differntials in $E^2$ are $d_k\colon H_k(\lieh, V_{-S}) \to H_{k - 2}(\lieh,
	V_{-S}\otimes_\CC \CC_\lambda)$. The spectral sequence collapses if and only
	if all the differentials are isomorphisms of vector spaces. 
	
	If $d_k$ is not an isomorphism for some $k$, then either
	$H_{k - 2}(\lieh,  V_{-S}\otimes_\CC \CC_\lambda)$ or $H_k(\lieh, V_{-S})$ are nonzero. This means that $S$ or $S_{-\lambda}$ are in $\sigma_\lieh(V)$. This proves (2).
	
	Suppose that $\lambda=0$. Since $S\cong S_{-\lambda}$, we have $\sigma_{\lieg}(V^\pi)\subset \sigma_\lieh(V)$ by (2). Let $S$ be in $\sigma_\lieh(V)$. There is an integer $k\geq0$ such that $H^k(\lieh,V_{-S})\neq0$ and $H^i(\lieh, V_{-S})=0$ for $i<k$. The differential $d_k\colon H_k(\lieh, V_{-S}) \to H_{k - 2}(\lieh,V_{-S})$ is not an isomorphism, so $H_k(\lieg, V_{-S})\neq 0$. This proves (3).
\end{proof}

\section{Case of solvable Lie algebras}%
\label{sec:solvable}
\subsection{Trivial module}
In this section $\lieg$ denotes an arbitrary solvable Lie algebra of dimension $n$. By Lie's
theorem, every simple $\lieg$-module is one-dimensional, so we identify $\hat\lieg$ with the
space $(\lieg/[\lieg,\lieg])^*$ of characters. 

Let $V$ be a finite-dimensional $\lieg$-module. By \emph{weights} $\omega(V) \subset \hat\lieg$ we mean the set of diagonal matrix entries in a triangular
basis for $V$. It is independent on the choice of such basis and can be also described
as the set of one-dimensional subquotients of $V$ (Jordan-H\"older theorem). Consider the adjoint
representation $\ad\lieg \in \lmod\lieg$. The weights $\omega(\ad\lieg)$ are called
\emph{Jordan-H{\"o}lder values} of $\lieg$.  We denote by $2\rho$ the sum of all Jordan-H{\"o}lder values
with multiplicites. If $(g_1,...,g_n)$ is a triangular basis of $\ad\lieg$, then we have 
\[
g \cdot g_1\wedge \cdots \wedge g_n = 2\rho(g) g_1\wedge \cdots \wedge g_n
\]
for $g\in\lieg$. So, $\bigwedge^n\lieg$ is a one-dimensional $\lieg$-module with character $2\rho$.
\begin{theorem} \label{t:jordanholder}
	Let  $\lieg$ be a solvable Lie algebra of dimension $n$. If $\lambda \in \hat\lieg$ is in the
	spectrum $\sigma_\lieg(\CC)$, then it is the sum of at most $n$ Jordan-H{\"o}lder values of
	$\lieg$. Moreover, if $\lambda \in \sigma_\lieg(\CC)$, then $2\rho - \lambda$ is also in
	$\sigma_\lieg(\CC)$.
\end{theorem}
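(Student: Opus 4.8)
The plan is to rewrite the condition $\lambda\in\sigma_\lieg(\CC)$, via Proposition~\ref{p:definitions}(2), as the requirement that $H_k(\lieg,\CC_{-\lambda})\ne 0$ for some $k\in\{0,\dots,n\}$, and then to handle the two assertions separately: the first by reading off the weights of the Chevalley--Eilenberg complex, the second by Poincar\'e duality.

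For the first assertion, recall that $H_\bullet(\lieg,\CC_{-\lambda})$ is computed by the complex $C_\bullet$ with $C_k=\bigwedge^k\lieg\otimes_\CC\CC_{-\lambda}$. This is a complex of $\lieg$-modules: $\lieg$ acts on $\bigwedge^k\lieg$ through the adjoint action extended by the Leibniz rule (as in the paragraph preceding Proposition~\ref{t:poincare}), on $\CC_{-\lambda}$ through the character $-\lambda$, and diagonally on the tensor product, and a direct inspection of the explicit formula shows the differential is $\lieg$-equivariant. Hence $H_k(\lieg,\CC_{-\lambda})$ is a $\lieg$-subquotient of $C_k$, so by the Jordan--H\"older theorem every weight of $H_k(\lieg,\CC_{-\lambda})$ is a weight of $C_k$. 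On the other hand $\lieg$ acts trivially on $H_\bullet(\lieg,-)$ --- this is the homology form of Cartan's homotopy formula $L_x=\partial\iota_x+\iota_x\partial$, with $\iota_x(c\otimes m)=(x\wedge c)\otimes m$ --- so a nonzero $H_k(\lieg,\CC_{-\lambda})$ has $0$ as a weight, and therefore $0\in\omega(C_k)$ for some $k$.

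It remains to determine $\omega(C_k)$. Fix a triangular basis $(g_1,\dots,g_n)$ of $\ad\lieg$ with diagonal entries (the Jordan--H\"older values) $\alpha_1,\dots,\alpha_n$, so that $g\cdot g_i\in\alpha_i(g)\,g_i+\sum_{l<i}\CC g_l$ for all $g\in\lieg$. The vectors $g_{i_1}\wedge\dots\wedge g_{i_k}$ with $i_1<\dots<i_k$ form a basis of $\bigwedge^k\lieg$, and ordering them by the value of $i_1+\dots+i_k$ (ties broken arbitrarily) makes the adjoint action triangular with diagonal entry $\alpha_{i_1}+\dots+\alpha_{i_k}$ on $g_{i_1}\wedge\dots\wedge g_{i_k}$, because replacing some factor $g_{i_j}$ by a lower basis vector either kills the monomial or strictly decreases the index sum. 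Consequently $\omega(C_k)=\{\alpha_{i_1}+\dots+\alpha_{i_k}-\lambda : i_1<\dots<i_k\}$, and the condition $0\in\omega(C_k)$ says precisely that $\lambda=\alpha_{i_1}+\dots+\alpha_{i_k}$ for some $k$-element set of indices. Since $k\le n$, this exhibits $\lambda$ as a sum of at most $n$ Jordan--H\"older values.

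For the second assertion I would combine the two isomorphisms of Proposition~\ref{t:poincare}. Applied to $V=\CC_{-\lambda}$, the first gives $H^k(\lieg,\CC_\lambda)\cong H_k(\lieg,\CC_{-\lambda})^*$; applied to $V=\CC_\lambda$, the second gives $H^k(\lieg,\CC_\lambda)\cong H_{n-k}\bigl(\lieg,(\bigwedge^n\lieg)^*\otimes_\CC\CC_\lambda\bigr)$. Since $\bigwedge^n\lieg$ has character $2\rho$, the module $(\bigwedge^n\lieg)^*\otimes_\CC\CC_\lambda$ is $\CC_{\lambda-2\rho}=\CC_{-(2\rho-\lambda)}$, whence $H_k(\lieg,\CC_{-\lambda})^*\cong H_{n-k}(\lieg,\CC_{-(2\rho-\lambda)})$ for all $k$, and therefore $\lambda\in\sigma_\lieg(\CC)$ if and only if $2\rho-\lambda\in\sigma_\lieg(\CC)$. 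The only point that genuinely does work here is the triviality of the $\lieg$-action on homology, which is what turns ``$H_k(\lieg,\CC_{-\lambda})\ne 0$'' into the arithmetic constraint $0\in\omega(C_k)$; the rest is the $\lieg$-equivariance of the Chevalley--Eilenberg differential and routine triangularity bookkeeping.
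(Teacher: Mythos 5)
Your proof is correct, and for the first assertion it takes a genuinely different route from the paper. The paper argues by induction on $\dim\lieg$: pick a one-dimensional ideal $\mathfrak{c}\subset\lieg$ with character $\mu$, apply Proposition~\ref{p:1ext}(2) to get $\sigma_\lieg(\CC)\subset\{0,\mu\}+\sigma_{\lieg/\mathfrak{c}}(\CC)$, and use the inductive hypothesis for $\lieg/\mathfrak{c}$. You instead compute directly from the Chevalley--Eilenberg complex: the weights of $C_k=\bigwedge^k\lieg\otimes_\CC\CC_{-\lambda}$ are the sums of $k$ distinct-index Jordan--H\"older values shifted by $-\lambda$, and since the diagonal $\lieg$-action on homology is trivial (the homological Cartan formula), a nonzero $H_k$ forces $0$ to be a weight of $C_k$. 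This is closer in spirit to Millionschikov's original argument that the paper's remark alludes to, and it yields slightly more than the stated theorem: it ties the homological degree $k$ to the number of summands and shows the summands come from distinct indices of a fixed triangular basis. The paper's inductive argument is shorter once Proposition~\ref{p:1ext} (which itself invokes the Hochschild--Serre spectral sequence) is available, and it illustrates the use of the extension machinery developed in Section~\ref{sec:spectrumofmodule}; your argument is more self-contained but leans on two standard facts not explicitly stated in the paper, namely the $\lieg$-equivariance of the Chevalley--Eilenberg differential and the homotopy-triviality of the Lie derivative, which you correctly identify as the crux. For the second assertion, your combination of the two isomorphisms in Proposition~\ref{t:poincare} and the identification $(\bigwedge^n\lieg)^*\otimes_\CC\CC_\lambda\cong\CC_{-(2\rho-\lambda)}$ is the same computation the paper packages as Proposition~\ref{p:definitions}(4) applied to the self-dual module $\CC$.
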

\begin{proof}
	The proof is by induction on $n$. The theorem is trivial for $n=1$. Assume that the statement holds
	for all solvable Lie algebras of dimension $n-1$. Choose any one-dimensional ideal
	$\mathfrak{c}$ in $\lieg$ and denote the corresponding character by $\mu$. By Proposition \ref{p:1ext}.(2) we know that the spectrum $\sigma_\lieg(\CC)$ is the subset of $\{0, \mu\} + \sigma_{\lieg/\mathfrak{c}}(\CC)$. By induction, any $\nu \in
	\sigma_{\lieg/\mathfrak{c}}(\CC)$ is the sum of at most $n-1$ Jordan-H{\"o}lder values of
	$\lieg/\mathfrak{c}$, which are also Jordan-H{\"o}lder values of $\lieg$. Since $\mu$ is a Jordan-H{\"o}lder value of $\lieg$, we conclude that any element of the spectrum $\sigma_\lieg(\CC)$ is the sum of at most $n$ Jordan-H{\"o}lder values of $\lieg$.
	
	For the second assertion we use Proposition \ref{p:definitions}. The trivial module is isomorphic to
	its dual, so if $\CC_\lambda$ is in the spectrum of $\CC$, then $\CC_\lambda^* \otimes_\CC
	\bigwedge^n\lieg \cong \CC_{2\rho - \lambda}$ is also in the spectrum.
\end{proof}
\begin{remark}
	 An equivalent statement was first obtained by Millionschikov in \cite[Theorem 3.1]{mill}. Theorem \ref{t:jordanholder} is a slight reformulation of that result in terms of the Taylor spectrum.
\end{remark}

Obviously, $0$ is always in the spectrum and, hence, so is $2\rho$. So,
generally, there are often more than one element in the spectrum of $\CC$. 
\begin{example}
	Let $\lieg$ be the 3-dimensional solvable Lie algebra with basis $e_1, e_2, e_3$ and the
	commutator given by $[e_1, e_2] = e_2$ and $[e_1, e_3] = \lambda e_3$ for some $\lambda \in
	\CC$. The space of characters is one-dimensional, so we identify it with $\CC$ by the evaluation at
	$e_1$. Then $\sigma_\lieg(\CC) = \{0,1,\lambda, 1 + \lambda\}$. Indeed, $0$ and $2\rho = 1 +
	\lambda$ are always in the spectrum and for $1$ and $\lambda$ the first homology groups are
	non-vanishing.
\end{example}

\subsection{Nilpotent Lie algebras}
The following well-known fact in representation theory for nilpotent Lie algebras is crucial for the computation of the Taylor spectrum. 
\begin{lemma} \label{l:nilpdecomp}
	Let $V$ be a finite-dimensional indecomposable module over nilpotent Lie algebra $\lieg$. Then the set of weights $\omega(V)$ consists of one element.
\end{lemma}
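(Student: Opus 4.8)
The plan is to reduce to the case where $V$ is a cyclic-type module and then exploit the fact that nilpotent Lie algebras act by upper-triangular matrices in which the diagonal (i.e.\ the weight) is essentially forced to be constant. First I would fix a triangular basis $v_1,\dots,v_m$ of $V$ adapted to a composition series, so that each $g\in\lieg$ acts by an upper-triangular matrix whose $i$-th diagonal entry is a character $\mu_i\in\hat\lieg$; the set $\omega(V)$ is exactly $\{\mu_1,\dots,\mu_m\}$, and I must show these are all equal. Suppose not; by twisting $V$ by $-\mu_1$ I may assume $\mu_1=0$ appears, and some other weight $\nu\neq 0$ appears. The idea is that the generalized weight spaces give a genuine $\lieg$-module decomposition.

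The key point is the following: for a finite-dimensional module over a nilpotent Lie algebra, the generalized weight space decomposition $V=\bigoplus_{\mu}V^{(\mu)}$, where $V^{(\mu)}=\{v : (g-\mu(g))^N v=0 \text{ for all } g, N\gg0\}$, is a decomposition into $\lieg$-submodules. I would prove this by the standard argument: for a single nilpotent operator this is the usual generalized eigenspace decomposition, and for a commuting-up-to-nilpotents family one checks that if $v\in V^{(\mu)}$ and $g\in\lieg$, then $g\cdot v$ again lies in $V^{(\mu)}$. Concretely, one shows $(h-\mu(h))^N(g\cdot v)$ can be expanded using the identity $h(gv)=g(hv)+[h,g]v$ and, since $[h,g]$ again acts nilpotently modulo lower terms (using that $\lieg$ is nilpotent so iterated brackets eventually vanish), one gets that $h-\mu(h)$ acts nilpotently on $g\cdot v$ as well. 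This is essentially Lie's theorem / Engel-type bookkeeping; the cleanest route is to cite that $U\lieg$ acts locally finitely and the weight $\mu$ is the unique weight on the submodule generated by any weight vector, which follows because on a nilpotent Lie algebra a weight of a submodule restricts compatibly.

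Once the generalized weight decomposition is established as a $\lieg$-module decomposition, the conclusion is immediate: if $V$ is indecomposable, the decomposition $V=\bigoplus_\mu V^{(\mu)}$ has a single summand, hence $\omega(V)$ is a single element. The main obstacle I anticipate is verifying cleanly that each $V^{(\mu)}$ is $\lieg$-stable — i.e.\ that nilpotence of $\lieg$ (not just solvability) is exactly what makes the weight spaces invariant. The point where solvability would fail is precisely that $[\lieg,\lieg]$ need not act nilpotently on a weight space in the solvable case, so one must use that in the nilpotent case the adjoint action is nilpotent, forcing $[h,g]$ to contribute only nilpotent corrections. I would present this invariance as a short lemma-within-the-proof, proved by induction on the nilpotency class of $\lieg$ (or on the length of the composition series of $V$), reducing to the one-operator case which is classical linear algebra.
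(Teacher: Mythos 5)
Your approach is correct and is, in substance, the proof of the result the paper cites: the paper establishes Lemma~\ref{l:nilpdecomp} by pointing to \cite[Chapter VII, Proposition 9]{bourbaki}, which is exactly the statement that for a nilpotent Lie algebra $\lieg$ the generalized weight spaces $V^{(\mu)}$ are $\lieg$-submodules and $V=\bigoplus_\mu V^{(\mu)}$; indecomposability then forces a single $\mu$. One wording caveat: the decisive fact is not that $[h,g]$ acts nilpotently on $V$ (that already holds for any solvable $\lieg$, since $[\lieg,\lieg]$ is strictly upper-triangular in a Lie basis), but that $\ad(h)$ is nilpotent \emph{on $\lieg$ itself}, so that in the expansion
\[
(h-\mu(h))^N(gv)=\sum_{k\ge 0}\binom{N}{k}\,\bigl(\ad(h)^k g\bigr)\,(h-\mu(h))^{N-k}v
\]
the sum over $k$ terminates at a fixed bound, and taking $N$ large kills every term. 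Your parenthetical ``iterated brackets eventually vanish'' is the right reason, and it is precisely where nilpotence, as opposed to mere solvability, is used.
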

\begin{proof}
	\cite[Chapter VII, Proposition 9]{bourbaki}
\end{proof}
We call modules with only one weight \textit{monoweighted}. Lemma \ref{l:nilpdecomp} shows us that any
finite-dimensional module over a nilpotent Lie algebra can be decomposed in the sum of monoweighted
submodules.  We are ready to formulate the main result.
\begin{theorem} \label{t:nilpspectrum}
	Let $\lieg$ be a nilpotent Lie algebra and $V$ be a finite-dimensional $\lieg$-module. Then the
	spectrum $\sigma_\lieg(V)$ coincides with the set of isomorphism classes of one-dimensional submodules of $V$.
\end{theorem}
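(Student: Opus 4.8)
The plan is to reduce to the monoweighted case and then analyze when cohomology can be non-zero. By Lemma \ref{l:nilpdecomp}, $V$ decomposes as a direct sum of monoweighted submodules, and by Lemma \ref{l:exactsequence}(2) the spectrum of $V$ is the union of the spectra of these summands. So I may assume $V$ is monoweighted with unique weight $\mu \in \hat\lieg$. Note that for a nilpotent Lie algebra the adjoint representation is nilpotent, so $\bigwedge^n\lieg$ is the trivial module and $2\rho = 0$; hence by Proposition \ref{p:definitions} we have $S = \CC_\nu \in \sigma_\lieg(V)$ if and only if $H^k(\lieg, V_{-\nu}) \ne 0$ for some $k$, where $V_{-\nu}$ has unique weight $\mu - \nu$.

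The key step is to show that $H^k(\lieg, W) = 0$ for all $k$ whenever $W$ is a finite-dimensional module whose unique weight is non-zero; equivalently, $\CC_\nu \in \sigma_\lieg(V)$ forces $\nu = \mu$, and then among one-dimensional modules only $\CC_\mu$ can lie in the spectrum. The cleanest route is induction on $\dim\lieg$ using a one-dimensional central ideal $\mathfrak{c} = \CC_0 \subset \lieg$ (central since $\lieg$ is nilpotent), via the Hochschild–Serre spectral sequence $E^2_{p,q} = H_p(\lieg/\mathfrak{c}, H_q(\mathfrak{c}, W))$. Since $\mathfrak{c}$ acts nilpotently on $W$ and $W$ is monoweighted of weight $\tau \ne 0$, the action of a basis vector of $\mathfrak{c}$ on $W$ is a nilpotent operator; but $\mathfrak{c}$ being one-dimensional, $H_0(\mathfrak{c}, W) = W_\mathfrak{c}$ and $H_1(\mathfrak{c}, W) = W^\mathfrak{c}$ are the coinvariants and invariants of that single operator. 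Restricting to $\mathfrak{c}$, the weight $\tau$ restricts to a character of $\mathfrak{c}$; if $\tau|_\mathfrak{c} \ne 0$ then that operator is invertible, so both $H_0$ and $H_1$ vanish and we are done. If $\tau|_\mathfrak{c} = 0$, then $W_\mathfrak{c}$ and $W^\mathfrak{c}$ are non-zero modules over $\lieg/\mathfrak{c}$ with unique weight the image of $\tau$, which is still non-zero (as $\tau \ne 0$ vanishes on $\mathfrak{c}$); by the inductive hypothesis their $(\lieg/\mathfrak{c})$-cohomology vanishes, so $E^2 = 0$ and $H^*(\lieg, W) = 0$.

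It remains to check that $\CC_\mu$ actually \emph{does} lie in $\sigma_\lieg(V)$ when $V \ne 0$. Here $V_{-\mu}$ is monoweighted of weight $0$, i.e.\ a finite-dimensional module on which $\lieg$ acts by nilpotent operators; by Engel's theorem it has a non-zero invariant vector, so $H^0(\lieg, V_{-\mu}) = (V_{-\mu})^\lieg \ne 0$, giving $\CC_\mu \in \sigma_\lieg(V)$. Finally, $\CC_\mu$ is a submodule of $V$ precisely because $V$ is monoweighted: a non-zero invariant vector of $V_{-\mu}$ corresponds to a copy of $\CC_\mu$ sitting inside $V$, and conversely every one-dimensional submodule of the monoweighted module $V$ must have character $\mu$. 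Combining, for monoweighted $V$ we get $\sigma_\lieg(V) = \{\CC_\mu\}$ = the set of isomorphism classes of one-dimensional submodules of $V$, and reassembling the direct sum decomposition yields the theorem.

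The main obstacle is the vanishing statement in the middle paragraph: one must handle the base change to $\lieg/\mathfrak{c}$ carefully and confirm that ``monoweighted with non-zero weight'' is preserved under passing to $\mathfrak{c}$-invariants and $\mathfrak{c}$-coinvariants, including the degenerate case where the central character restricts to zero on the chosen ideal. An alternative to the spectral-sequence induction would be to invoke Lemma \ref{l:exactsequence}(1) together with a composition series of $V_{-\nu}$ to reduce to the one-dimensional case $H^*(\lieg, \CC_\tau)$ with $\tau \ne 0$, and then compute directly from the Chevalley–Eilenberg complex that the identity map $\lieg \to \CC$ is nowhere a cocycle obstruction — but the inductive argument via a central ideal is more transparent for a general nilpotent $\lieg$.
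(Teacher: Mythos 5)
Your proof is correct, but takes a genuinely different route from the paper's. The paper reduces to a monoweighted indecomposable $V$ of weight $0$ and inducts on $\dim V$: the base case $V \cong \CC$ is handled by Theorem~\ref{t:jordanholder} together with Engel's theorem (all Jordan--H\"older values of a nilpotent $\lieg$ are zero, so $\sigma_\lieg(\CC) = \{0\}$), and the inductive step uses the short exact sequence $0 \to \CC \to V \to V/\CC \to 0$ and Lemma~\ref{l:exactsequence}(1). You instead prove directly a Dixmier-type vanishing statement --- $H_*(\lieg, W) = 0$ for any finite-dimensional monoweighted $W$ of nonzero weight --- by induction on $\dim\lieg$, using the Hochschild--Serre spectral sequence for a one-dimensional central ideal $\mathfrak{c}$ and splitting into the cases $\tau|_\mathfrak{c} \neq 0$ (where $H_0(\mathfrak{c},W)$ and $H_1(\mathfrak{c},W)$ vanish outright) and $\tau|_\mathfrak{c} = 0$ (where one passes to $\lieg/\mathfrak{c}$). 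Your version does not route through Theorem~\ref{t:jordanholder} and makes the key vanishing principle explicit, which is conceptually informative; the paper's version is shorter because it leverages results already in hand. Both use Engel's theorem for the existence direction $\CC_\mu \in \sigma_\lieg(V)$.

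One slip to flag: you write ``Since $\mathfrak{c}$ acts nilpotently on $W$ \dots\ the action of a basis vector of $\mathfrak{c}$ on $W$ is a nilpotent operator,'' which is false in general --- the central ideal $\mathfrak{c}$ acts nilpotently on $W$ precisely when $\tau|_\mathfrak{c} = 0$, and indeed your very next sentence gives the correct dichotomy (invertible when $\tau|_\mathfrak{c} \neq 0$, nilpotent when $\tau|_\mathfrak{c} = 0$). The sentence should instead say only that $\mathfrak{c}$, being central, acts via a single $\lieg$-equivariant operator with generalized eigenvalue $\tau|_\mathfrak{c}$; the rest of the argument is correct as written, including the observation that $W^\mathfrak{c}$ and $W_\mathfrak{c}$ remain monoweighted of nonzero weight over $\lieg/\mathfrak{c}$, which is what the inductive hypothesis requires.
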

\begin{proof}
	The spectrum of $V$ is the union of spectra of its indecomposable submodules by Lemma \ref{l:exactsequence}(2), so we may assume that $V$ is indecomposable. Then by Lemma \ref{l:nilpdecomp}, $V$ is monoweighted. If $\mu$ is the only weight of $V$, then $\omega(V_{-\mu})=\{0\}$ and $\sigma_\lieg(V_{-\mu})=\sigma_\lieg(V)-\mu$, so we may additionally assume that $\omega(V)=\{0\}$. The proof is by induction on the dimension of $V$.
	
	By Engel's theorem, all Jordan-H{\"o}lder values of $\lieg$
	are zero. Using Theorem \ref{t:jordanholder} we conclude that the assertion holds for 
	$V\cong \CC$. If $\lambda$ is a character of $\lieg$, then $\sigma_\lieg(\CC_\lambda)=\sigma_\lieg(\CC)+\lambda$. This means that the assertion holds for one-dimensional modules. 
	
	Suppose now that $\dim V = m$ and that the theorem holds for modules of dimension less than $m$. Choose a one-dimensional trivial submodule of $V$.
	We have a short exact sequence of modules
	\[
	0\to \CC \to V \to V/\CC \to 0.
	\]
	By Lemma \ref{l:exactsequence}(1), $\sigma(V) \subset
	\sigma(\CC) \cup \sigma(V/\CC)$. But $\sigma(\CC) = \sigma(V/\CC) = \{0\}$ by induction. On the other hand,
	$0$ is in the spectrum of $V$ since $H_0(\lieg, V) = V_\lieg \neq 0$. This completes the
	proof.
\end{proof}

\subsection{Borel subalgebras of semisimple Lie algebras}
Here we use the terminology from the theory of semisimple Lie algebras. We refer the reader to \cite[Chapter 2]{humphreys} for details. 

For the rest of the section $\lies$ is a semisimple Lie algebra and $\lieg$ is a Borel subalgebra \mbox{of $\lies$}. It is known
that $\lieg$ is isomorphic to a semidirect product $\lieh \ltimes \lien$, where
$\lieh$ is a Cartan subalgebra of $\lies$ and $\lien = [\lieg, \lieg]$.  We denote the root system of $\lies$ relative to $\lieh$ by $\Delta \subset \lieh^*
= \hat\lieg$. We write $\Delta^+ \subset \Delta$ for the subset of positive roots. Elements of $\Delta^+$
are simply nonzero Jordan-H\"older values of $\lieg$. We use the notation $W(\Delta)$ for the Weyl group. For any
element $w\in W$ its length is denoted by $l(w)$. 

The aim of the subsection is to describe the spectrum of irreducible $\lies$-modules considered
as $\lieg$-modules. First we describe the cohomology of such modules over $\lien$ by Kostant's theorem. Then we use the formula for the cohomology of a semidirect product to compute the cohomology over $\lieg$.

Let $V$ be a $\lies$-module of highest weight $\lambda$. The cohomology groups $H^k(\lien, V)$ are naturally $\lieh$-modules after a canonical identification $\lieh=\lieg/\lien$. Kostant's theorem gives an explicit formula for these modules. 
\begin{lemma}[Kostant's theorem] \label{l:kostantthm}
	As an $\lieh$-module,
	$H^k(\lien, V)$ is the sum of one-dimensional modules with weights
	\[
	w(\lambda + \rho) - \rho,~w\in W(\Delta), ~l(w) = k,
	\]
	where $\rho$ is the half-sum of positive roots (or, equivalently, Jordan-H\"older values).
\end{lemma}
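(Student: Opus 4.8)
The plan is to prove Kostant's theorem by Hodge theory on the Chevalley--Eilenberg complex, with the Laplacian computed through the Casimir operator; this is the classical route. \emph{Setup.} I would work with the cochain complex $C^k=\Hom_\CC(\bigwedge^k\lien,V)\cong\bigwedge^k\lien^*\otimes_\CC V$ computing $H^k(\lien,V)$. Since $\lieh$ normalizes $\lien$, each $C^k$ is an $\lieh$-module and the differential $d$ is $\lieh$-equivariant, so $H^\bullet(\lien,V)$ is an $\lieh$-module and decomposes into $\lieh$-weight spaces; it therefore suffices to identify these. The Killing form of $\lies$ pairs $\lien$ perfectly with the opposite nilradical $\lien^-$ and restricts nondegenerately to $\lieh$, giving an $\lieh$-module identification $\lien^*\cong\lien^-$, hence $C^\bullet\cong\bigwedge^\bullet\lien^-\otimes_\CC V$; under it the weights of $C^k$ are the $\nu-\sum_{\alpha\in S}\alpha$ with $\nu$ a weight of $V$ and $S\subseteq\Delta^+$, $|S|=k$. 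I would equip $V$ with a positive-definite contravariant Hermitian form and $\lien^-$ with the Hermitian form coming from a compact real form of $\lies$, chosen so distinct $\lieh$-weight spaces are orthogonal, let $\partial\colon C^k\to C^{k-1}$ be the formal adjoint of $d$, and set $L=d\partial+\partial d$ --- an $\lieh$-equivariant, non-negative, self-adjoint operator preserving degree and weight. Finite-dimensional Hodge theory then gives $H^k(\lien,V)_\mu\cong\ker\!\big(L\mid C^k_\mu\big)$.

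\emph{The Laplacian is a difference of Casimirs.} This is the crux of the argument. Choosing root vectors as an orthogonal basis and carrying out the Clifford-algebra-type bookkeeping for how $d$ and $\partial$ interact, one shows that on the $\lieh$-weight-$\mu$ subspace of $C^\bullet$ the operator $L$ acts by the scalar $\tfrac12\big(\langle\lambda+\rho,\lambda+\rho\rangle-\langle\mu+\rho,\mu+\rho\rangle\big)$, where the normalization records that the Casimir element of $\lies$ acts on $V$ by $\langle\lambda,\lambda+2\rho\rangle$. Consequently $H^\bullet(\lien,V)_\mu=0$ unless $\langle\mu+\rho,\mu+\rho\rangle=\langle\lambda+\rho,\lambda+\rho\rangle$.

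\emph{Determining the surviving weights, multiplicities, and degrees.} Since $\lambda+\rho$ is dominant and regular, a short argument with the usual partial order (the orbit fact $w(\lambda+\rho)\preceq\lambda+\rho$ with equality only for $w=1$, applied after translating by $\rho$) shows that the only weights $\mu$ occurring in $C^\bullet$ with $\langle\mu+\rho,\mu+\rho\rangle=\langle\lambda+\rho,\lambda+\rho\rangle$ are $\mu_w:=w(\lambda+\rho)-\rho$, $w\in W(\Delta)$. Moreover, a combinatorial lemma on extremal weights shows that $\mu_w$ occurs in $C^\bullet$ in exactly one degree, namely $k=l(w)$, and there with multiplicity one, the weight space being spanned by $v_{w\lambda}\otimes\bigwedge_{\alpha\in\Phi_w}f_{-\alpha}$, where $v_{w\lambda}\in V$ is an extremal weight vector, $\Phi_w=\Delta^+\cap w(-\Delta^+)$, and $|\Phi_w|=l(w)$. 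On this line $L$ vanishes by the previous paragraph, so the cochain is harmonic and represents a nonzero class; hence $\dim H^{l(w)}(\lien,V)_{\mu_w}=1$, while every other weight space of $H^\bullet(\lien,V)$ vanishes. As a consistency check, $\sum_k(-1)^k\operatorname{ch}C^k=\operatorname{ch}(V)\prod_{\alpha\in\Delta^+}(1-e^{-\alpha})=\sum_{w\in W(\Delta)}(-1)^{l(w)}e^{\mu_w}$ by the Weyl character formula, and the sign $(-1)^{l(w)}$ matches the degree $l(w)$ in which $\mu_w$ appears, so no cancellation between degrees occurs. Collecting the $\mu_w$ over all $w$ with $l(w)=k$ gives precisely the asserted description of $H^k(\lien,V)$.

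\emph{Main obstacle.} The genuine work is the second step: producing the right orthogonal root basis and pushing the bracket/Clifford bookkeeping through so that $2L$ collapses to the scalar Casimir difference on each weight space; the rest is either formal (Hodge theory) or standard root-system combinatorics. A secondary point requiring care is the combinatorial lemma that $\mu_w$ has a one-dimensional weight space concentrated in the single degree $l(w)$ --- once that is in hand, harmonicity of the displayed cochains is automatic, since $L$ already annihilates their zero-eigenvalue weight space.
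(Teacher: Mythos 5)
The paper does not prove this lemma; it simply cites \cite[Theorem~6.12]{knapp}, which is Kostant's theorem proved by precisely the Hodge-theoretic route you outline (Laplacian as a difference of Casimir actions, identification of the surviving $\lieh$-weights as $w(\lambda+\rho)-\rho$, and the combinatorics of $\Phi_w=\Delta^+\cap w(-\Delta^+)$ with $|\Phi_w|=l(w)$). Your sketch is a correct and faithful outline of that standard argument, so it matches the substance of the cited proof even though the paper itself only gives a reference.
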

\begin{proof}
	 \cite[Theorem 6.12]{knapp}
\end{proof}
We say that an abelian Lie algebra $\lieh$ acts \textit{torally} on an $\lieh$-module $W$ if $W$ is a direct sum
of one-dimensional submodules. If $\lieh$ is a Cartan subalgebra of a semisiple Lie algebra
$\lies$, then  $\lieh$ acts torally on any finite dimensional $\lies$-module. If $\lieg = \lieh \ltimes
\lien$ is the Borel subalgebra of $\lies$, then $\lieh$ also acts torally on $\lien$. 
\begin{lemma} \label{l:semiproduct}
	For a finite-dimensional $\lies$-module $V$ we have
	\[
		H^k(\lieg, V) = \bigoplus_{p + q = k} \bigwedge^p\lieh^* \otimes_\CC H^q(\lien, V)^\lieh
	\]
\end{lemma}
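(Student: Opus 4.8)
The plan is to compute the cohomology of the semidirect product $\lieg = \lieh \ltimes \lien$ via the Hochschild–Serre spectral sequence associated to the ideal $\lien \trianglelefteq \lieg$. Its second page is $E_2^{p,q} = H^p(\lieh, H^q(\lien, V))$ and it converges to $H^{p+q}(\lieg, V)$. Since $\lieh$ is abelian and acts torally on every finite-dimensional $\lies$-module, it acts torally on each subquotient, and in particular on the cohomology modules $H^q(\lien, V)$ (these are naturally $\lieh$-modules by the identification $\lieh = \lieg/\lien$, as noted before Kostant's theorem). So the first step is to record that for a torally acting $\lieh$ on a finite-dimensional module $W$ one has $H^p(\lieh, W) = \bigwedge^p \lieh^* \otimes_\CC W^\lieh$: indeed $W$ splits as a sum of weight lines $\CC_\nu$, the cohomology $H^*(\lieh, \CC_\nu)$ vanishes for $\nu \neq 0$ and equals $\bigwedge^* \lieh^*$ for $\nu = 0$ (the abelian Lie algebra cohomology with trivial coefficients is the exterior algebra on $\lieh^*$), and summing over weights isolates exactly the zero-weight part $W^\lieh$. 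Applying this with $W = H^q(\lien, V)$ gives $E_2^{p,q} = \bigwedge^p \lieh^* \otimes_\CC H^q(\lien, V)^\lieh$.

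The second step is to show the spectral sequence degenerates at $E_2$, so that $H^k(\lieg, V) = \bigoplus_{p+q=k} E_2^{p,q}$ as claimed (the extension problem is automatically trivial since we are over a field and everything is finite-dimensional — and in fact the grading by $\lieh$-weight, which is preserved throughout, splits it). The cleanest way I would argue degeneration is by a dimension count combined with Kostant's theorem: by Lemma \ref{l:kostantthm}, $H^q(\lien, V)$ is a sum of one-dimensional $\lieh$-modules indexed by Weyl group elements of length $q$, so $\dim H^q(\lien, V)^\lieh$ equals the number of $w \in W(\Delta)$ with $l(w) = q$ and $w(\lambda+\rho) = \rho$. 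Summing the dimensions of $E_2^{p,q}$ over $p+q = k$ gives $\sum_{p+q=k} \binom{\dim\lieh}{p} \cdot \#\{w : l(w)=q,\, w(\lambda+\rho)=\rho\}$. One then compares this with $\dim H^k(\lieg, V)$ computed a second, independent way — for instance via the Chevalley–Eilenberg complex for $\lieg$ together with the toral $\lieh$-action, or, more slickly, by observing that the spectral sequence of $\lien \trianglelefteq \lieg$ always satisfies $\dim H^k(\lieg, V) \le \dim E_2^{k}$ and that equality of the alternating sums (Euler characteristics) forces equality at every $k$ once one checks the Euler characteristic is computed correctly on both sides. In fact the slick route is: the $\lieh$-action is compatible with every differential $d_r$, and $d_r$ raises $p$ by $r$ while leaving the total $\lieh$-weight fixed; since $E_2^{p,q}$ is concentrated in a single $\lieh$-weight (namely $0$, after the $W$-weights have been killed by taking $\lieh$-invariants), all higher differentials vanish for weight reasons, yielding $E_2 = E_\infty$.

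The main obstacle is making the degeneration argument airtight rather than hand-wavy. The weight argument just sketched needs the observation that $E_2^{p,q} = \bigwedge^p\lieh^* \otimes H^q(\lien,V)^\lieh$ sits in $\lieh$-weight $0$ (true, since we took invariants), but then $d_r \colon E_r^{p,q} \to E_r^{p+r, q-r+1}$ maps weight-$0$ to weight-$0$, which does not automatically vanish — so the clean "weight reasons" line actually fails, and one genuinely needs the dimension/Euler-characteristic comparison, or a direct construction of a splitting. I would therefore fall back on the explicit comparison: compute $\chi(H^*(\lieg,V))$ from the Chevalley–Eilenberg complex $\bigwedge^* \lieg^* \otimes V = \bigwedge^*\lieh^* \otimes \bigwedge^*\lien^* \otimes V$, which factors as $\chi(\bigwedge^*\lieh^*) \cdot \chi(\bigwedge^*\lien^* \otimes V)$; since $\dim\lieh$ may be positive, $\chi(\bigwedge^*\lieh^*) = 0$, so Euler characteristics alone are not enough and one must instead track the full Poincaré polynomial graded by $\lieh$-weight, where on the $\lieh$-side the toral action gives $H^*(\lieh, \CC_\nu)$ supported only at $\nu = 0$. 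Running the spectral sequence weight-by-weight over $\lieh^*$ then does isolate, in each fixed $\lieh$-weight, a situation where the comparison of ranks forces degeneration. I expect this bookkeeping — splitting $V$ and $\lien$ into $\lieh$-weight spaces and checking the Künneth-type factorization survives — to be the one genuinely delicate point; everything else is the standard Hochschild–Serre machinery plus Kostant's theorem, both of which are available to us.
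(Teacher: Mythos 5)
Your setup via the Hochschild--Serre spectral sequence for the ideal $\lien\trianglelefteq\lieg$ is reasonable, and your computation of the $E_2$ page is correct: $\lieh$ is abelian and acts torally on each $H^q(\lien,V)$, so $E_2^{p,q}=H^p(\lieh,H^q(\lien,V))=\bigwedge^p\lieh^*\otimes_\CC H^q(\lien,V)^\lieh$, using that $H^*(\lieh,\CC_\nu)$ is $\bigwedge^*\lieh^*$ when $\nu=0$ and vanishes otherwise. However, the degeneration at $E_2$ is the entire content of the lemma, and you do not actually prove it. You try three mechanisms and, to your credit, correctly diagnose why each fails: the $\lieh$-weight argument fails because $d_r$ maps weight $0$ to weight $0$; the Euler-characteristic argument fails because $\chi(\bigwedge^*\lieh^*)=0$ when $\dim\lieh>0$; and the dimension count against ``an independent computation of $\dim H^k(\lieg,V)$'' is circular, since producing that independent computation is precisely the problem. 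The closing sentence about ``running the spectral sequence weight-by-weight'' and expecting the Künneth-type factorization to survive is a hope, not an argument. So there is a genuine gap.

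For comparison, the paper proves nothing here: it cites \cite[Theorem 4]{coll}, which establishes the formula for cohomology of Lie semidirect products under the toral-action hypothesis. The mechanism that Coll--Gerstenhaber (and, in the semisimple-quotient case, Hochschild--Serre themselves) use, and that you are missing, is not to fight the spectral sequence at all but to work directly with the Chevalley--Eilenberg complex $C^*(\lieg,V)=\bigwedge^*\lieh^*\otimes\bigwedge^*\lien^*\otimes V$. Since $\lieh$ acts reductively on this complex, the inclusion of the $\lieh$-invariant subcomplex $C^*(\lieg,V)^\lieh=\bigwedge^*\lieh^*\otimes(\bigwedge^*\lien^*\otimes V)^\lieh$ is a quasi-isomorphism. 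One then checks by an explicit computation with the Chevalley--Eilenberg differential that on $\lieh$-invariant cochains all cross terms (those involving the $\lieh$-action on $V$, on $\lien^*$, and brackets between $\lieh$ and $\lien$) cancel, so the restricted differential is literally the tensor-product differential $d_\lieh\otimes 1\pm 1\otimes d_\lien$ with $d_\lieh$ the trivial-coefficient differential for the abelian $\lieh$. The Künneth theorem then yields $H^k(\lieg,V)\cong\bigoplus_{p+q=k}H^p(\lieh,\CC)\otimes H^q((\bigwedge^*\lien^*\otimes V)^\lieh)=\bigoplus_{p+q=k}\bigwedge^p\lieh^*\otimes H^q(\lien,V)^\lieh$, where the last step again uses reductivity of the $\lieh$-action to commute invariants with cohomology. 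Degeneration of your Hochschild--Serre sequence then follows a posteriori by comparing dimensions; but the point is that the tensor-splitting of the invariant subcomplex is the input, not something one can conjure from the spectral sequence alone. If you want to keep your proof, you must supply this computation (or cite Coll--Gerstenhaber as the paper does).
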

\begin{proof}
	It is a special case of \cite[Theorem 4]{coll}. 
\end{proof}

The following theorem gives a complete description of the Taylor spectrum for Borel subalgebras in the special case of modules restricted from a semisimple algebra.
\begin{theorem} \label{t:borelspectrum}
	Let $\lies$ be a semisimple Lie algebra and $V$ be a simple $\lies$-module of highest weight $\lambda$. Let $\lieg$ be a Borel subalgebra of $\lies$. Then the Taylor spectrum of $V$ considered as a $\lieg$-module is given by
	\begin{equation} \label{e:borelspec}
	\sigma_\lieg(V) = \{ \rho + w(\lambda + \rho)\colon w \in W(\Delta)\},
	\end{equation}
	where $\rho$ is the half-sum of positive roots and $W(\Delta)$ is the Weyl group.
\end{theorem}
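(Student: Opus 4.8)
The plan is to combine Kostant's theorem (Lemma \ref{l:kostantthm}), the semidirect product formula (Lemma \ref{l:semiproduct}), and the characterization of the spectrum via cohomology with a twist (Proposition \ref{p:definitions}). Since $\lieg$ is solvable, every simple $\lieg$-module is one-dimensional, so I identify $\hat\lieg$ with $\lieh^*$. The Jordan--H\"older values of $\lieg$ are exactly the positive roots $\Delta^+$ (each with multiplicity one) together with $0$ with multiplicity $\dim\lieh = \ell$; hence $\bigwedge^n\lieg$ is the one-dimensional module with character $2\rho = \sum_{\alpha\in\Delta^+}\alpha$. By Proposition \ref{p:definitions}, a character $\nu\in\lieh^*$ lies in $\sigma_\lieg(V)$ if and only if $H^k(\lieg, \bigwedge^n\lieg \otimes_\CC V_{-\nu}) \ne 0$ for some $k$, i.e.\ if and only if $H^k(\lieg, V_{2\rho - \nu}) \ne 0$ for some $k$. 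So the task reduces to computing the set of weights $\mu$ for which $H^k(\lieg, V_\mu)\ne 0$ for some $k$, and then translating by $2\rho$.

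First I would compute $H^k(\lieg, V_\mu)$ for $V_\mu = V \otimes_\CC \CC_\mu$ using Lemma \ref{l:semiproduct}:
\[
H^k(\lieg, V_\mu) = \bigoplus_{p+q=k} \bigwedge^p\lieh^* \otimes_\CC H^q(\lien, V_\mu)^\lieh.
\]
Since $\CC_\mu$ is trivial as an $\lien$-module (it is pulled back along $\lieg\to\lieh$), we have $H^q(\lien, V_\mu) = H^q(\lien, V)\otimes_\CC\CC_\mu$ as $\lieh$-modules. By Kostant's theorem, $H^q(\lien, V)$ is the toral $\lieh$-module with weights $\{w(\lambda+\rho)-\rho : l(w)=q\}$, so $H^q(\lien, V_\mu)$ has weights $\{w(\lambda+\rho)-\rho+\mu : l(w)=q\}$. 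The invariants $H^q(\lien, V_\mu)^\lieh$ are therefore nonzero precisely when $\mu = \rho - w(\lambda+\rho)$ for some $w$ with $l(w)=q$; in that case $\bigwedge^p\lieh^*\otimes_\CC H^q(\lien,V_\mu)^\lieh$ contributes in cohomological degree $k=p+q$ for $0\le p\le\ell$, and in particular is nonzero for $p=0$. Thus $H^k(\lieg, V_\mu)\ne 0$ for some $k$ if and only if $\mu \in \{\rho - w(\lambda+\rho) : w\in W(\Delta)\}$.

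Finally I would apply the translation: $\nu\in\sigma_\lieg(V)$ iff $H^k(\lieg, V_{2\rho-\nu})\ne 0$ for some $k$ iff $2\rho - \nu = \rho - w(\lambda+\rho)$ for some $w$, i.e.\ $\nu = \rho + w(\lambda+\rho)$ for some $w\in W(\Delta)$, which is exactly \eqref{e:borelspec}. One mild subtlety to address is that distinct Weyl group elements can give the same weight $w(\lambda+\rho)$ (when $\lambda+\rho$ is not regular, e.g.\ $\lambda$ not strictly dominant), but since the statement is about the set of characters in the spectrum this causes no problem --- it only collapses the indexing set. The main obstacle is bookkeeping rather than conceptual: I must make sure the weight conventions in Kostant's theorem and in Proposition \ref{p:definitions} are consistent (signs, the role of $\rho$ versus $2\rho$, whether $V_\mu$ means $S\otimes_\CC V$ with the character acting correctly), and confirm that the exterior-algebra factor $\bigwedge^p\lieh^*$ only ever enlarges the set of degrees in which cohomology is nonzero and never cancels a contribution --- which is clear because $\bigwedge^0\lieh^* = \CC$ always survives, so the $p=0$ term alone already witnesses non-vanishing.
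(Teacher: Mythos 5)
Your proof is correct and follows essentially the same route as the paper: Proposition \ref{p:definitions}(3) to pass to $H^k(\lieg, V_{2\rho-\nu})$, then Lemma \ref{l:semiproduct} to reduce to $\lieh$-invariants of $H^q(\lien,\cdot)$, then Kostant's theorem to read off when those invariants are nonzero. The only differences are cosmetic (you substitute $\mu=2\rho-\nu$ and restore $\nu$ at the end, and you spell out why the $\bigwedge^p\lieh^*$ factor and the possible non-regularity of $\lambda+\rho$ cause no trouble), so this matches the paper's argument.
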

\begin{proof}
	By Proposition \ref{p:definitions}, $\nu \in \sigma_{\lieg}(V)$ if and only if $H^k(\lieg,
	\bigwedge^n\lieg \otimes_\CC V_{-\nu})$ is nonzero for some $k$. The module $\bigwedge^n \lieg$ is isomorphic to $\CC_{2\rho}$, so $\bigwedge^n\lieg
	\otimes_\CC V_{-\nu} \cong V_{2\rho - \nu}$. By Lemma \ref{l:semiproduct}, we have
	\[
	H^k(\lieg, V_{2\rho-\nu}) = \bigoplus_{p + q = k} \bigwedge^p\lieh^* \otimes_\CC H^q(\lien, V_{2\rho-\nu})^\lieh = \bigoplus_{p + q = k} \bigwedge^p\lieh^* \otimes_\CC (H^q(\lien, V)_{2\rho-\nu})^\lieh.
	\]
	It follows that $\nu\in \sigma_\lieg(V)$ if and only if $(H^q(\lien, V)_{2\rho-\nu})^\lieh$ is nonzero for some $q$. By Lemma \ref{l:kostantthm}, $H^q(\lien, V)_{2\rho-\nu}$ is the sum of one-dimensional modules with weights $w(\lambda+\rho) +\rho -\nu$ with $w\in W$ of length $l(w)=q$. We conclude that $\nu \in \sigma_\lieg(V)$ if and only if $\nu = w(\lambda+\rho)+\rho$ for some $w\in W(\Delta)$.
\end{proof}
\begin{remark}
	A formula equivalent to \eqref{e:borelspec} was obtained for the trivial module in \cite[Section 4.4]{total}. Theorem \ref{t:borelspectrum} generalizes this to arbitrary $\lies$-modules and gives the description in terms of the Taylor spectrum.
\end{remark}

\end{document}